\theoremstyle{plain}
\newtheorem{thm}{Theorem}[section]
\newtheorem{lem}[thm]{Lemma}
\newtheorem{cor}[thm]{Corollary}
\newtheorem{qu}[thm]{Problem}
\theoremstyle{definition}
\newtheorem{defn}[thm]{Definition}
\theoremstyle{remark}
\newtheorem*{rem}{Remark}
\newcommand{\nc}{\newcommand}
\nc{\dmo}{\DeclareMathOperator}
\DeclareMathOperator{\Homeo}{Homeo}
\DeclareMathOperator{\Stab}{Stab}
\DeclareMathOperator{\GL}{GL}
\DeclareMathOperator{\DD}{\mathbb{D}}
\DeclareMathOperator{\RR}{\mathbb{R}}
\DeclareMathOperator{\Mod}{Mod}
\DeclareMathOperator{\Hull}{Hull}
\newcommand{\R}{\mathbb{R}}
\newcommand{\Z}{\mathbb{Z}}
\newcommand{\pair}[1]{\langle #1 \rangle}
\nc{\para}[1]{\medskip\noindent\textbf{#1.}}
\title[Global fixed points of mapping class group actions]{\boldmath Global fixed points of mapping class group actions and a theorem of Markovic}
\author{Lei Chen and Nick Salter}
\date{February 26, 2020}
\address{LC: Department of Mathematics, California Institute of Technology, Pasadena, CA 91125}
\address{NS: Department of Mathematics, Columbia University, New York, NY 10027}
\email{chenlei@caltech.edu}
\email{nks@math.columbia.edu}
\thanks{NS is supported by NSF Award No. DMS-1703181.}
\begin{document}
 \bibliographystyle{alpha}
\maketitle
\vspace{-0.2in}
\begin{abstract}
We give a short and elementary proof of the non-realizability of the mapping class group via homeomorphisms. This was originally established by Markovic, resolving a conjecture of Thurston. With the tools established in this paper, we also obtain some rigidity results for actions of the mapping class group on Euclidean spaces. 
\end{abstract}

\section{introduction}
In this paper, we discuss a new strategy to study Nielsen's realization problem. We will use this to give a new and very simple proof of the main result of \cite{Mark}. 

Let $\Sigma_g$ be the surface of genus $g$ and $\Homeo_+(\Sigma_g)$ the group of orientation-preserving homeomorphisms of $\Sigma_g$. Denote by $\Mod(\Sigma_g):=\pi_0(\Homeo_+(\Sigma_g))$ the mapping class group of $\Sigma_g$. The {\em Nielsen realization problem} asks whether the natural projection
\[
p_g:\Homeo_+(\Sigma_g)\to \Mod(\Sigma_g)
\]
has a group-theoretic section (this particular formulation is attributed to Thurston; c.f. \cite[Problem 2.6]{kirby}). The realization problem can also be posed for other regularities and for arbitrary subgroups of $\Mod(\Sigma_g)$, and there is a rich literature on the many variants that arise. We refer the reader for the survey paper \cite{MT} for further discussion. 

In \cite{Mark}, Markovic shows that $p_g$ has no section for $g>5$. The proof is very involved and uses many dynamical tools. The main result of this note gives an elementary proof of his result in the optimal range $g \ge 2$.
\begin{thm}\label{Nielsen}
For $g\ge2$, the projection $p_g$ has no sections.
\end{thm}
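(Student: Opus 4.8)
The plan is to derive a contradiction from a hypothetical group‑theoretic section $\sigma\colon\Mod(\Sigma_g)\to\Homeo_+(\Sigma_g)$ of $p_g$, in two stages: first upgrade $\sigma$ to an action of $\Mod(\Sigma_g)$ on $\Sigma_g$ that possesses a \emph{global fixed point}, and then observe that such a section cannot exist.

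First I would record the constraints on the action $\rho\colon\Mod(\Sigma_g)\curvearrowright\Sigma_g$ determined by $\sigma$. Since $p_g\circ\sigma=\mathrm{id}$, the homomorphism $\sigma$ is injective and $\rho$ induces the Dehn--Nielsen--Baer action on $\pi_1(\Sigma_g)$ (up to conjugacy) and the symplectic action on $H_1(\Sigma_g;\Z)$; in particular the Lefschetz number of $\rho(f)$ depends only on $f\in\Mod(\Sigma_g)$. I would then exploit the torsion in $\Mod(\Sigma_g)$: for finite-order $f$ the homeomorphism $\rho(f)$ has the same order and, by the classical theorem of Ker\'ekj\'art\'o, is topologically conjugate to an isometry of a hyperbolic (or, in degenerate cases, Euclidean or spherical) metric on $\Sigma_g$, so $\Fix(\rho(f))$ is finite, invariant under $\rho(C_{\Mod(\Sigma_g)}(f))$, and the centralizer descends to an action on the quotient orbifold. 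Taking $f=\iota$, the hyperelliptic involution (which exists for all $g\ge2$; since $\iota$ acts by $-\mathrm{id}$ on $H_1$ the Lefschetz count gives $|\Fix(\rho(\iota))|=2g+2$ and the quotient is a sphere with $2g+2$ marked points), reduces matters --- for $g=2$, where $\iota$ is central so $C_{\Mod(\Sigma_2)}(\iota)=\Mod(\Sigma_2)$ --- to realizing $\Mod(S^2,6)$ by homeomorphisms of $S^2$ preserving six marked points, where a planar dynamics argument can be run using that $\Homeo_+(S^2)$ is connected and that its finite subgroups are conjugate into $\SO(3)$. For $g\ge3$ one must run the same analysis over a family of finite-order and subsurface subgroups and argue that their common invariant set contains a point fixed by all of $\Mod(\Sigma_g)$.

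Granting a global fixed point $x\in\Sigma_g$, the concluding stage is short: $\sigma$ then factors through $\Homeo_+(\Sigma_g,x)$, and passing to path components gives a homomorphism $\bar\sigma\colon\Mod(\Sigma_g)\to\pi_0\!\big(\Homeo_+(\Sigma_g,x)\big)=\Mod(\Sigma_{g,1})$ which is a section of the Birman exact sequence
\[
1\longrightarrow\pi_1(\Sigma_g)\longrightarrow\Mod(\Sigma_{g,1})\longrightarrow\Mod(\Sigma_g)\longrightarrow1 .
\]
Since this sequence does not split for $g\ge2$, this is a contradiction; alternatively, one can iterate the fixed-point analysis on $\Sigma_g\setminus\{x\}$, pinning down an arbitrarily large finite invariant point set, and derive a contradiction from the resulting constraints directly.

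The crux --- and the step demanding the most care --- is producing the global fixed point. A single finite-order or subsurface subgroup only forces $\rho$ to preserve a finite set, and fusing the constraints of enough non-commuting such subgroups into one common fixed point is precisely the difficulty that Kerckhoff's realization theorem for finite groups must overcome by dynamical means. The essential new input should therefore be an elementary mechanism --- a particularly economical generating set, or a relation such as the lantern, chain, or hyperelliptic relation --- that forces the relevant invariant sets to coalesce without any hard surface dynamics.
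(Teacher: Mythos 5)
There is a genuine gap: the step you yourself identify as the crux --- producing a global fixed point for the action of $\Mod(\Sigma_g)$ on the closed surface $\Sigma_g$ --- is never carried out, and the setting you have chosen makes it much harder than necessary. On $\Sigma_g$ a finite-order homeomorphism has a finite fixed set that is in general not a singleton ($2g+2$ points for the hyperelliptic involution), so the centralizer of such an element is only constrained to permute a finite set; you offer no mechanism forcing the invariant sets of several non-commuting subgroups to coalesce into one common point, and your sketch for $g\ge 3$ (``run the same analysis over a family of subgroups'') is precisely the missing argument rather than a proof of it. The paper sidesteps this by changing the space being acted on: a section of $p_g$ pulls back, via the Birman exact sequence, to a section of $p_g'\colon \Homeo_+(\DD^2)^{\pi_1(S_g)}\to \Mod(\Sigma_{g,1})$, i.e.\ to an action of the \emph{pointed} mapping class group $\Mod(\Sigma_{g,1})$ on the plane $\DD^2\cong\RR^2$ in which $\pi_1(\Sigma_g)$ acts by deck transformations. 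On the plane, Ker\'ekj\'art\'o's theorem gives every nontrivial finite-order orientation-preserving homeomorphism a \emph{unique} fixed point, and uniqueness is exactly what lets centralizers inherit that fixed point with no dynamics at all. The remaining input is purely group-theoretic (Theorem \ref{main}): there is an order-$6$ element $\alpha_g$ with $\Mod(\Sigma_{g,1})=\pair{C(\alpha_g^2),C(\alpha_g^3)}$, so the whole group fixes the common fixed point of the powers of $\alpha_g$, contradicting freeness of the deck action. Your closing guess that ``an economical generating set or a relation'' is the essential new input is in the right spirit, but you do not produce one.

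Two further points. First, even granting a global fixed point on $\Sigma_g$, your concluding contradiction invokes the non-splitting of the Birman exact sequence, which is itself a nontrivial theorem needing proof or citation (and the standard argument via free cyclic actions does not apply when $g=2$); the paper's endgame instead uses only the elementary fact that deck transformations act freely on the universal cover. Second, several of your intermediate assertions (conjugating finite subgroups of $\Homeo_+(S^2)$ into $\SO(3)$, reducing $\Mod(\Sigma_2)$ to the six-marked sphere) import machinery that is neither justified in your sketch nor needed for the paper's argument.
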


We obtain this as a consequence of a rigidity theorem for actions of a closely related group. Let $\Sigma_{g,1}$ be the surface of genus $g$ with one marked point and let $\Homeo_+(\Sigma_{g,1})$ denote the group of orientation-preserving homeomorphisms of $\Sigma_{g,1}$ that fix the marked point. Define $\Mod(\Sigma_{g,1}):=\pi_0(\Homeo_+(\Sigma_{g,1}))$ to be the ``pointed mapping class group'' of $\Sigma_{g,1}$. 

\begin{thm}\label{R2}
For $g \ge 2$, any nontrivial action of $\Mod(\Sigma_{g,1})$ on $\RR^2$ by homeomorphisms has a global fixed point.
\end{thm}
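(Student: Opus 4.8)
The plan is to leverage the abundance of finite‑order elements in $\Mod(\Sigma_{g,1})$ against the rigidity of periodic homeomorphisms of the plane. Concretely, I would use two ingredients: (i) \emph{Ker\'ekj\'art\'o's theorem} — every nontrivial periodic orientation‑preserving homeomorphism of $\RR^2$ is topologically conjugate to a finite‑order rotation, hence has exactly one fixed point; and (ii) a generation statement for the pointed mapping class group — that for $g\ge 2$ there is a finite‑order element $\tau_0\in\Mod(\Sigma_{g,1})$ of \emph{odd} order and finitely many conjugates $\tau_1,\dots,\tau_m$ of $\tau_0$ which generate $\Mod(\Sigma_{g,1})$ and for which the graph with vertex set $\{\tau_1,\dots,\tau_m\}$ and an edge $\{\tau_i,\tau_j\}$ whenever $\tau_i\tau_j=\tau_j\tau_i$ is connected. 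Odd order is a convenience: it forces $\rho(\tau_0)$, and hence each $\rho(\tau_i)$, to be orientation‑preserving, so one never has to deal with reflections. One can instead pass to the subgroup $\rho^{-1}(\Homeo_+(\RR^2))$ of index $\le 2$, noting that in the exceptional case where $\rho$ kills it the action factors through a $\Z/2$ generated by a reflection, whose axis is a line of global fixed points; but for $g=2$ the abelianization of $\Mod(\Sigma_{g,1})$ is $\Z/10$, which does surject onto $\Z/2$, so this case must genuinely be addressed.

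Granting (ii), let $\rho\colon\Mod(\Sigma_{g,1})\to\Homeo(\RR^2)$ be nontrivial. If $\rho(\tau_0)=\mathrm{id}$ then $\rho(\tau_i)=\rho(g_i)\rho(\tau_0)\rho(g_i)^{-1}=\mathrm{id}$ for every generator $\tau_i=g_i\tau_0g_i^{-1}$, making $\rho$ trivial; so $\rho(\tau_0)\ne\mathrm{id}$, and being of odd order it is a nontrivial periodic orientation‑preserving homeomorphism. By (i) each $\rho(\tau_i)$ then has a single fixed point, say $\Fix(\rho(\tau_i))=\{p_i\}$. If $\tau_i$ and $\tau_j$ commute, then $\rho(\tau_j)$ preserves the $\rho(\tau_i)$‑fixed set $\{p_i\}$, so $p_i\in\Fix(\rho(\tau_j))=\{p_j\}$, that is, $p_i=p_j$; by connectedness of the commutation graph all the $p_i$ agree, say $p_i=p$. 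Then $p$ is fixed by every $\rho(\tau_i)$, hence by all of $\rho(\Mod(\Sigma_{g,1}))$, so $p$ is a global fixed point.

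The hard part is input (ii): exhibiting a single conjugacy class of finite‑order elements that both generates $\Mod(\Sigma_{g,1})$ and is knit together by commutations. The naive idea is to take $\tau_0$ supported on a proper subsurface (a small‑genus or multiply‑punctured piece), so that two conjugates with disjoint supports automatically commute, and then to bridge arbitrary pairs of generators by chains of such disjoint‑support conjugates; the subtlety is that a finite‑order mapping class cannot be supported on a subsurface \emph{rel its boundary}, so the pieces must instead be permuted by $\tau_0$, and there is genuine tension between being ``localized enough to commute with many conjugates'' and having ``enough conjugates to generate,'' which is sharpest for $g=2$. This is exactly where I expect the structural apparatus foreshadowed by the paper — the point‑pushing subgroup $\pi_1(\Sigma_g)\trianglelefteq\Mod(\Sigma_{g,1})$, its interaction with finite‑order elements, and canonical reduction systems — to do the work, both in verifying generation and in producing the connecting chains (for instance by analyzing the $\Mod(\Sigma_{g,1})$‑invariant subset $\Fix(\rho(\pi_1\Sigma_g))$ and showing it must be a single point). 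Once (ii) is established the dynamics is the short computation above, and Theorem~\ref{Nielsen} follows from Theorem~\ref{R2} by the reduction carried out elsewhere in the paper.
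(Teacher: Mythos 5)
Your dynamical step is sound and is essentially the mechanism the paper uses: Ker\'ekj\'art\'o gives a unique fixed point for each nontrivial finite-order orientation-preserving homeomorphism of $\RR^2$, and commuting elements must preserve each other's (singleton) fixed sets. The problem is your input (ii), which you correctly identify as the hard part but which is in fact \emph{impossible} for the pointed mapping class group, not merely unproven. Every finite subgroup of $\Mod(\Sigma_{g,1})$ is cyclic: by Nielsen realization (which holds for marked surfaces), a finite subgroup is realized by orientation-preserving homeomorphisms fixing the marked point, and the stabilizer of a point in such an action embeds in $\SO(2)$, hence is cyclic. Consequently, if two torsion elements $\tau_i,\tau_j$ commute, then $\pair{\tau_i,\tau_j}$ is a finite abelian, hence cyclic, subgroup; since $\tau_i$ and $\tau_j$ are conjugate they have the same order, and a cyclic group has a unique subgroup of each order, so $\pair{\tau_i}=\pair{\tau_j}$. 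By induction along paths, every connected component of your commutation graph generates a single finite cyclic subgroup. A connected graph of conjugate torsion elements therefore generates a finite cyclic group, never $\Mod(\Sigma_{g,1})$. This also explains the ``tension'' you sensed between locality and generation: for the pointed group it is not a tension but an outright obstruction, and no choice of $\tau_0$ (odd order or not) can satisfy (ii).

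The paper's Theorem \ref{main} is designed precisely to evade this: instead of generating by commuting \emph{torsion} elements, it generates by the two \emph{centralizers} $C(\alpha_g^2)$ and $C(\alpha_g^3)$ of powers of a single order-$6$ element $\alpha_g$. These are large infinite subgroups (containing many Dehn twists, which is where all the work in Sections \ref{section:chords}--\ref{section:proof} goes), and each one preserves the singleton $\Fix(\rho(\alpha_g^k))$, which coincides with the unique fixed point of $\rho(\alpha_g)$; the ``connectedness'' is supplied for free by the fact that both powers come from the same element. If you want to salvage your outline, you should replace each vertex $\tau_i$ of your graph by the full centralizer $C(\tau_i)$ and prove a generation statement at that level --- which is exactly the shape of Theorem \ref{main}. (Your side remark about orientation is well taken: for $g=2$ the abelianization of $\Mod(\Sigma_{2,1})$ is $\Z/10$, which does surject onto $\Z/2$, so the orientation issue does require an argument there; but this does not repair the main gap.)
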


We now explain how Theorem \ref{R2} implies Theorem \ref{Nielsen}.
\begin{proof}
Let $\Homeo_+(\DD^2)^{\pi_1(S_g)}$ denote the group of $\pi_1(\Sigma_g)$-equivariant orientation-preserving homeomorphisms of the universal cover $\DD^2$ of $\Sigma_g$. This is compatible with the Birman exact sequence for the mapping class group, realizing $\Homeo_+(\DD^2)^{\pi_1(S_g)}$ as the pullback of $\Homeo_+(\Sigma_g)$ and $\Mod(\Sigma_{g,1})$ along $\Mod(\Sigma_g)$:
\[
\xymatrix{
1 \ar[r]  & \pi_1(\Sigma_g)\ar[r]\ar@{=}[d] & \Homeo_+(\DD^2)^{\pi_1(S_g)} \ar[r]\ar[d]^{p_g'} &\Homeo_+(\Sigma_g)\ar[r]\ar[d]^{p_g}& 1 \\
1 \ar[r] &\pi_1(\Sigma_g)\ar[r]  &\Mod(\Sigma_{g,1}) \ar[r] &\Mod(\Sigma_g)\ar[r] &1 }
\]
By the universal property of pullbacks, a section of $p_g$ gives rise to a section of $p_g'$; any such section $p_g'$ must realize $\pi_1(\Sigma_g)$ as the group of deck transformations of $\Sigma_g$.

By Theorem \ref{R2}, the action of $\Mod(\Sigma_{g,1})$ on $\DD^2 \cong \mathbb{R}^2$ via a section of $p_g'$ has a global fixed point, which contradicts the fact that deck transformations act freely.
\end{proof}

The same set of ideas also leads to a rigidity theorem for mapping class group actions on $\R^3$ in the regime $g \ge 4$. 
\begin{thm}\label{R3}
For $g \ge 4$, any nontrivial continuous action of $\Mod(\Sigma_{g,1})$ on $\RR^3$ has a globally-invariant line.
\end{thm}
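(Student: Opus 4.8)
My plan is to produce the invariant line by leveraging Smith theory for the many finite-order elements of $\Mod(\Sigma_{g,1})$, together with the mechanism behind Theorem \ref{R2} to clean up the low-dimensional debris. Write $G=\Mod(\Sigma_{g,1})$ and fix a nontrivial continuous action $G\curvearrowright\RR^3$. The surface $\Sigma_{g,1}$ admits a large family of finite-order symmetries fixing the marked point --- for instance rotations arising from cyclic branched covers with a branch point over the marked point --- so $G$ contains an abundant supply of torsion, and for $g\ge 4$ one has enough of these elements, and enough commuting relations among them, to generate $G$. Pinning this family down is the whole game.

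The key local input is as follows. For a torsion element $\rho\in G$ of prime order $p$, Smith theory forces $\Fix(\rho)\subseteq\RR^3$ to be nonempty and mod-$p$ acyclic; appealing to the topological classification of periodic homeomorphisms of $\RR^3$ (the circle of ideas around the Smith conjecture and finite group actions on $3$-manifolds), $\Fix(\rho)$ is then either a single point or a tamely embedded line $\ell_\rho$, with $\rho$ acting near $\Fix(\rho)$ as a standard rotation. If $\rho$ and $\sigma$ are commuting torsion elements then each preserves the other's fixed locus; since an orientation-preserving periodic homeomorphism of $\RR^3$ that preserves a line fixes it pointwise (compare the eigenvalue structure of $\SO(3)$, noting $G$ has no index-two subgroup so there are no orientation-reversing elements in play), one extracts a rigid dichotomy: if $\Fix(\rho)$ and $\Fix(\sigma)$ are both lines they coincide, if both are points they coincide, and a line and a point can never commute. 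Hence the commuting graph on any set of torsion generators of $G$ splits into a ``point part'' and a ``line part'' with no edges between, and on each connected component the common fixed locus is a single point, respectively a single line.

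Consequently, if $G$ is generated by a family of torsion elements whose commuting graph is connected, then all of them --- hence all of $G$ --- either share a common fixed point or preserve a common line $\ell$, and in either case Theorem \ref{R3} follows (a global fixed point lies on every invariant line through it). To treat the residual possibility that the available torsion does not organize this cleanly, I would bring in the point-pushing subgroup $\pi_1(\Sigma_g)\trianglelefteq G$: its fixed set is closed and $G$-invariant, and using that $\pi_1(\Sigma_g)$ is normalized by torsion elements whose fixed loci are thereby forced to interact, one should be able to show it is nonempty, after which the action of an appropriate subgroup on it is handled by the argument of Theorem \ref{R2}. The hypothesis $g\ge 4$ --- as opposed to $g\ge 2$ in Theorem \ref{R2} --- should enter precisely here, guaranteeing the richer configuration of commuting finite subgroups inside $\Mod(\Sigma_{g,1})$ that the three-dimensional argument needs.

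The main obstacle I expect is controlling the topology of $\Fix(\rho)$ for a merely continuous action: a priori the fixed set of a finite cyclic group acting topologically on $\RR^3$ need not be a manifold, so the step asserting ``point or tame line'' must either be justified through the classification of finite topological actions on $3$-manifolds or be replaced by the weaker shape-theoretic consequences of mod-$p$ acyclicity that the commuting dichotomy actually consumes. The remaining difficulties are bookkeeping: choosing the right finite generating set of torsion elements of $\Mod(\Sigma_{g,1})$, verifying connectivity of its commuting graph exactly in the range $g\ge 4$, and dispatching the degenerate all-points case without a circular appeal to Theorem \ref{R3}.
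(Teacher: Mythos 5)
Your proposal runs on a ``rigid dichotomy'' for commuting torsion elements --- that commuting orientation-preserving periodic homeomorphisms of $\RR^3$ with line fixed sets must share the same line, because ``an orientation-preserving periodic homeomorphism of $\RR^3$ that preserves a line fixes it pointwise.'' That claim is false, and the eigenvalue structure of $\SO(3)$ that you invoke is exactly what refutes it: the commuting involutions $\mathrm{diag}(1,-1,-1)$ and $\mathrm{diag}(-1,1,-1)$ are orientation-preserving, each preserves the other's axis (acting on it as $t\mapsto -t$), and their fixed lines are the $x$-axis and the $y$-axis, which are distinct. So the Klein four group in $\SO(3)$ already breaks the dichotomy, the commuting graph gives you nothing, and the whole strategy collapses at its central step. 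What actually makes the fixed lines coincide in this paper is not commutativity but the fact that the two relevant elements are $\alpha_g^2$ and $\alpha_g^3$ for a \emph{single} element $\alpha_g$ of order $6$: $\alpha_g$ preserves $F(\alpha_g^3)$, acts trivially on it modulo $\alpha_g^3$, hence acts there with order dividing $3$; since $\Z/3\Z$ cannot act nontrivially on a line (a finite-order homeomorphism of $\R$ has order $1$ or $2$), $\alpha_g$ and therefore $\alpha_g^2$ fix $F(\alpha_g^3)$ pointwise, forcing $F(\alpha_g^2)=F(\alpha_g^3)$. The coprimality of $2$ and $3$ is doing the work, not a general principle about commuting torsion.

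Two further gaps. First, you never say why the torsion elements act nontrivially under $\rho$; the paper needs the Lanier--Margalit normal generation theorem for non-hyperelliptic periodic elements to conclude that $\rho(\alpha_g^2)$ and $\rho(\alpha_g^3)$ are nontrivial, and the requirement that $\alpha_g^3$ be non-hyperelliptic is precisely why the construction demands $g\ne 3$ --- this, not ``richer configurations of commuting finite subgroups,'' is where $g\ge 4$ enters. Second, the generation statement you defer as ``bookkeeping'' --- finding torsion elements whose fixed-set constraints propagate to all of $\Mod(\Sigma_{g,1})$ --- is the entire content of Theorem \ref{main}: once $F:=F(\alpha_g^2)=F(\alpha_g^3)$ is known, $C(\alpha_g^2)$ and $C(\alpha_g^3)$ each preserve $F$, and $\pair{C(\alpha_g^2),C(\alpha_g^3)}=\Mod(\Sigma_{g,1})$ finishes the proof. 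Your fallback via the point-pushing subgroup is not developed and is not needed. The Smith-theoretic input you describe (fixed set nonempty, mod-$p$ acyclic, a manifold in low dimension, hence a tame line after one-point compactification) does match the paper's use of local Smith theory, but everything downstream of it needs to be replaced.
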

\begin{cor}\label{R23}
For $g \ge 4$, there is no action of $\Mod(\Sigma_{g,1})$ on $\RR^2$ and $\RR^3$ by $C^1$ diffeomorphisms.
\end{cor}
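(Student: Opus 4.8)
The plan is to derive Corollary~\ref{R23} from Theorems~\ref{R2} and~\ref{R3} together with the Thurston stability theorem: if a finitely generated group $\Gamma$ acts by $C^1$ diffeomorphisms on a manifold, fixing a point $q$ but acting nontrivially on every neighborhood of $q$, then there is a nonzero homomorphism $\Gamma\to\RR$. Write $G=\Mod(\Sigma_{g,1})$ with $g\ge 4$ and suppose, toward a contradiction, that $G$ acts nontrivially on $\RR^n$ by $C^1$ diffeomorphisms, where $n\in\{2,3\}$.

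The first step is to produce a global fixed point for the action. For $n=2$ this is exactly Theorem~\ref{R2}. For $n=3$, Theorem~\ref{R3} provides a $G$-invariant line $L\cong\RR$, and I would then observe that \emph{every} action of $G$ on $\RR$ by homeomorphisms has a global fixed point: if the action on $L$ is nontrivial, extend it to an action on $L\times\RR\cong\RR^2$ through the first coordinate and apply Theorem~\ref{R2} to obtain a global fixed point, whose $L$-coordinate is then fixed by $G$; if the action on $L$ is trivial, every point of $L$ is fixed. In all cases the action on $\RR^n$ has a global fixed point $p$.

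The second step invokes the fact that $G=\Mod(\Sigma_{g,1})$ is perfect for $g\ge 3$ (standard; e.g.\ $H_1(\Mod(\Sigma_{g,1}))=0$ follows readily from $H_1(\Mod(\Sigma_g))=0$ via the Birman exact sequence), so that $\mathrm{Hom}(G,\RR)=0$. Since $G$ is finitely generated, acts by $C^1$ diffeomorphisms, and fixes $p$, the Thurston stability theorem forces $G$ to act trivially on some neighborhood of $p$; that is, $p$ lies in the interior of the global fixed-point set $\Fix(G)\subseteq\RR^n$.

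The proof then finishes with a connectedness argument: $\Fix(G)$ is closed, it is nonempty by the first step, and it is open because the second step applies verbatim at each of its points; since $\RR^n$ is connected, $\Fix(G)=\RR^n$, so the action is trivial --- a contradiction. The step demanding the most care is the $n=3$ case of the first step, namely ruling out a fixed-point-free action on the invariant line; this is exactly where Theorem~\ref{R2} is used a second time, while the remaining ingredients (Thurston stability, perfectness of $\Mod(\Sigma_{g,1})$, and connectedness of Euclidean space) are assembled in a routine manner. One may note that this argument in fact gives the $\RR^2$ statement for all $g\ge 2$.
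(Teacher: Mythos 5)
There is a genuine gap at the heart of your Step 2: the statement you attribute to Thurston is not the Thurston stability theorem, and as you have stated it, it is false. Thurston's theorem concerns a finitely generated group of $C^1$ diffeomorphisms fixing a point $q$ \emph{whose derivatives at $q$ are all the identity}; only under that hypothesis does nontriviality of the germ at $q$ force a nonzero homomorphism to $\R$. Without the trivial-derivative hypothesis the conclusion fails already for $\Z/3$ acting on $\R^2$ by rotation about the origin: it fixes the origin, acts nontrivially on every neighborhood of it, and admits no nonzero homomorphism to $\R$. Since the group at hand contains torsion (indeed the whole argument of the paper is built around the torsion element $\alpha_g$), this is exactly the kind of behavior you must rule out before Thurston stability can be invoked. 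The missing ingredient is precisely the step the paper supplies via Franks--Handel: taking derivatives at the global fixed point $x$ yields a linear representation $R:\Mod(\Sigma_{g,1})\to\GL(n,\R)$ with $n\le 3$, and by \cite[Theorem 1.1]{FranksHandel} every such homomorphism is trivial. Only after killing this linearization does Thurston stability apply (the paper then concludes that the image of $\rho$ is locally indicable, hence torsion-free, so $\rho(\alpha_g)=1$ and $\rho$ is trivial by the normal generation result of Lanier--Margalit). Your proof omits the Franks--Handel input entirely, so the argument does not close.

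The other parts of your proposal are sound and in places more careful than the paper's own write-up. Your production of a global fixed point in the $n=3$ case --- extending the action on the invariant line $L$ to $L\times\R$ and applying Theorem \ref{R2} --- legitimately addresses a point the paper passes over when it writes ``let $x\in F$ be any fixed point,'' and your open-closed argument on $\Fix(G)$ is a valid way to pass from triviality of the germ to triviality of the action (the paper instead routes through local indicability and torsion-freeness of the image). But these do not repair the central step, and the closing remark that the argument extends to all $g\ge 2$ inherits the same gap (and would additionally require checking that the linear-representation rigidity holds in that range).
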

%We remark that the proof of the above theorem uses torsion, however, we wonder about the following question.
%\begin{qu}
%Can this strategy work for torsion-free subgroup of $\Mod(S_g)$ like Torelli group? Can such groups act on $\RR^2$ without global fixed points?
%\end{qu}

All of the above results are an easy consequence of the following structural result for $\Mod(\Sigma_{g,1})$. For an element $f$ of a group $G$, we write $C(f)$ to denote the centralizer of $f$ in $G$. Also recall that a homeomorphism $\iota$ is said to be {\em hyperelliptic} if $\iota$ has order $2$ and has exactly $2g+2$ fixed points. A mapping class is said to be hyperelliptic if it admits a hyperelliptic representative. 
\begin{thm}\label{main}
For $g\ge2$, there exists an order $6$ element $\alpha_g\in \Mod(\Sigma_{g,1})$ such that 
\[
\Gamma := \pair{C(\alpha_g^2), C(\alpha_g^3)}
\]
is the full mapping class group:
\[
\Gamma = \Mod(\Sigma_{g,1}).
\]
If $g \ne 3$, then $\alpha_g$ can be constructed so that $\alpha_g^3$ is not hyperelliptic.
\end{thm}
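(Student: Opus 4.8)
The plan is to build $\alpha_g$ by hand from a $\mathbb{Z}/6$-symmetric model of $\Sigma_{g,1}$, to populate the two centralizers with Dehn twists about symmetric curves, and then to recognize a standard generating set of $\Mod(\Sigma_{g,1})$ inside $\Gamma$. Concretely, I would realize $\Sigma_{g,1}$ as a degree-$6$ cyclic branched cover $\Sigma_{g,1}\to\mathcal{O}_g$ of a sphere (or low-genus surface) with cone points, with $\langle\alpha_g\rangle\cong\mathbb{Z}/6$ the deck group, chosen so that the marked point lies over a cone point whose local monodromy generates $\mathbb{Z}/6$ and so that, by Riemann--Hurwitz, the cover has genus $g$; the number and orders of the remaining branch points are the parameters to be tuned. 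Then $\alpha_g^2$ has order $3$, $\alpha_g^3$ has order $2$, both fix the marked point, and they commute, so $\alpha_g$ itself lies in $C(\alpha_g^2)\cap C(\alpha_g^3)\subseteq\Gamma$.

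The one elementary mechanism I would use repeatedly is that if a simple closed curve $c$ (or, more generally, a disjoint multicurve) is invariant under a finite-order homeomorphism $\beta$, then $\beta T_c\beta^{-1}=T_{\beta(c)}=T_c$, so $T_c\in C(\beta)$; if $\beta$ merely permutes a collection of disjoint curves, the product of the corresponding twists still lies in $C(\beta)$. Thus $C(\alpha_g^2)$ contains twists about all $\alpha_g^2$-invariant curves, $C(\alpha_g^3)$ contains twists about all $\alpha_g^3$-invariant curves, and both contain $\alpha_g$. (If one wants the exact structure of the centralizers rather than just these inclusions, a version of the Birman--Hilden theorem applies, since $\alpha_g^2$ and $\alpha_g^3$ have prime order, identifying $C(\alpha_g^j)$ with a central $\langle\alpha_g^j\rangle$-extension of a liftable subgroup of $\Mod$ of the quotient orbifold; but for the generation statement only the elementary inclusions are needed.)

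The crux is then to show $\Gamma=\Mod(\Sigma_{g,1})$, and this is where the model of the first step must be chosen with care. I would fix a known finite Dehn-twist generating set of $\Mod(\Sigma_{g,1})$ --- a Humphries-type set, consisting of a chain of curves together with a bounded number of additional twists --- and engineer the $\mathbb{Z}/6$-symmetric picture so that the chain curves are each invariant under one of $\alpha_g^2$, $\alpha_g^3$ (placing the corresponding chain, resp.\ hyperelliptic, subgroup inside that centralizer), while the remaining generators, which are necessarily not part of the chain, are twists about curves invariant under the other. Verifying that all of these curves can be made simultaneously symmetric inside one $\mathbb{Z}/6$-picture --- equivalently, that the invariant curves of $\alpha_g^2$ and of $\alpha_g^3$ jointly generate $\Mod(\Sigma_{g,1})$ --- is the main obstacle, and I expect it to require an explicit analysis of the symmetric configuration together with a short case division according to the residue of $g$ modulo $3$ (which controls how the branch locus can be made symmetric). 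Once each generator is exhibited inside $C(\alpha_g^2)$ or $C(\alpha_g^3)$, the equality $\Gamma=\Mod(\Sigma_{g,1})$ is immediate.

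Finally, for the non-hyperelliptic refinement when $g\neq3$, I would note that the number of fixed points of the involution $\alpha_g^3$ is pinned down by Riemann--Hurwitz from the genus of $\Sigma_{g,1}/\langle\alpha_g^3\rangle$ and the branching data, and I would select a model realizing a number different from $2g+2$; the fact that this is possible precisely when $g\neq3$ is the source of that hypothesis, and the check is a one-line Euler-characteristic computation once the model is fixed.
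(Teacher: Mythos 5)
Your setup coincides with the paper's (a $\Z/6$-branched cover of $S^2$ with monodromy data tuned by the residue of $g$ mod $3$, and a Riemann--Hurwitz fixed-point count for the non-hyperelliptic refinement), but there is a genuine gap at exactly the point you flag as ``the main obstacle,'' and the only mechanism you allow yourself is too weak to get past it. You propose to realize an entire Humphries-type generating set by twists about curves each individually invariant under $\alpha_g^2$ or under $\alpha_g^3$. For $g \ne 3$ the involution $\alpha_g^3$ is deliberately \emph{not} hyperelliptic, so no full chain of $\alpha_g^3$-invariant curves exists, and curves invariant under the order-$3$ map $\alpha_g^2$ are similarly scarce; it is not clear (and you give no way to check) that twists about genuinely invariant curves generate anything close to the whole group. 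The paper's essential extra idea, which your proposal is missing, is a symmetry-breaking trick: for a suitable curve $c$ that is \emph{not} invariant but whose orbit $\{c, \alpha_g^2(c), \alpha_g^4(c)\}$ consists of disjoint curves, the product $T_c T_{\alpha_g^2(c)} T_{\alpha_g^4(c)}$ lies in $C(\alpha_g^2)$; choosing an $\alpha_g^3$-invariant curve $d$ with $i(c,d)=1$ and $i(\alpha_g^2(c),d)=i(\alpha_g^4(c),d)=0$, conjugating $T_d$ by this product yields $T_c T_d T_c^{-1} \in \Gamma$, and the braid relation then extracts $T_c$ itself. This is what puts twists about non-invariant curves into $\Gamma$; the paper then finishes by an inductive stabilization argument, growing a genus-$2$ base subsurface (whose mapping class group is shown to lie in $\Gamma$ using a mix of invariant curves and curves obtained by the trick above) one such curve at a time until it fills $\Sigma_{g,1}$. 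Without a device of this kind, your third step is an unverified hope rather than a proof.

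Two smaller omissions: the case $g=2$ does not fit the inductive scheme (the paper treats it by the explicit factorization $\alpha_2 = T_1 T_2 T_3 T_4 T_5$ followed by a chain of conjugations that isolates a single twist), and your statement that the product of twists over an orbit lies in the centralizer tacitly requires the orbit to be a multicurve (pairwise disjoint representatives), which is a real constraint on which configurations you may use.
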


\begin{qu}
If $\beta \in \Mod(\Sigma_{g,1})$ is a torsion element of order divisible by two distinct primes $p, q$, then there is no {\em a priori} obstruction for $\Mod(\Sigma_{g,1})$ to be generated by $C(\beta^p)$ and $C(\beta^q)$. Does the conclusion of Theorem \ref{main} hold for any torsion element $\beta$ with order not a prime power?
\end{qu}

We now explain how Theorem \ref{main} implies Theorem \ref{R2}. In Section \ref{section:R3}, we show how Theorem \ref{main} also implies Theorem \ref{R3} and Corollary \ref{R23} via a short argument in local Smith theory.
\begin{proof}
Since $\Mod(\Sigma_{g,1})$ is perfect, it has no nontrivial maps to $\mathbb{Z}/2$. Thus any action on $\mathbb{R}^n$ is orientation-preserving.  Let $\alpha_g$ be the symmetry of Theorem \ref{main}. Any continuous action of the finite-order element $\alpha_g$ on $\mathbb{R}^2$ has a unique fixed point $O$ (see \cite{CK} and \cite{Ker2}). Since this fixed point is unique, both $C(\alpha_g^2)$ and $C(\alpha_g^3)$ fix $O$. By Theorem \ref{main}, $\Mod(\Sigma_{g,1})$ fixes $O$, showing Theorem \ref{R2}.
\end{proof}

In the remainder of this note we construct the symmetry $\alpha_g$ and establish the properties claimed in Theorem \ref{main}. In Section \ref{section:models} we describe our models of $\Sigma_{g,1}$ equipped with the symmetry $\alpha_g$; we use a different model for each residue class $g \pmod 3$. In Section \ref{section:chords}, we discuss a special class of curves and subsurfaces on these model subsurfaces that feature in the proof of Theorem \ref{main}. We carry out the proof of Theorem \ref{main} in Section \ref{section:proof}. Finally, we deduce Theorem \ref{R3} and Corollary \ref{R23} from Theorem \ref{main} in Section \ref{section:R3}.

\para{Acknowledgements}
The authors would like to thank Vlad Markovic for helpful discussions.

\section{Models}\label{section:models}
The aim of this section is to construct, for each $g \ge 2$, a certain symmetry $\alpha_g \in \Mod(\Sigma_{g,1})$ of order $6$. The proof of Theorem \ref{main} in Section \ref{section:proof} requires the existence of certain configurations of symmetric curves which are easy to find only for certain conjugacy classes of order-$6$ elements of $\Mod(\Sigma_{g,1})$; this is why we must take care in constructing our symmetries. The Riemann--Hurwitz formula (c.f. Lemma \ref{RH}) implies that different constructions are necessary for each residue class of $g \pmod 3$. In order to give as uniform a presentation as possible, we represent each ``model surface'' as a disk with pairs of boundary segments identified; the rules for edge identification are specified by the data of a ``monodromy tuple'' to be presented below (see the table in \eqref{tuple}). 

For $g \ne 3$, the symmetries we use (and the corresponding symmetric surfaces) are depicted in Figure \ref{figure:models}. The case $g = 3$ requires special consideration; the model we use is shown in Figure \ref{figure:genus3model}. The discussion leading up to Figure \ref{figure:models} is not absolutely required to make sense of Figure \ref{figure:models}, but is included so as to help orient the reader.

\subsection{Branched covers} The symmetries we construct are realized as deck transformations of $\Z/6\Z$-branched covers of $S^2$. Here we recall the basic topological theory of branched coverings. Fix a group $G$ and surfaces $X$ and $Y$; we also fix the {\em branch locus} $B \subset Y$, a finite set of points. A branched covering $f: X \to Y$ with covering group $G$ branched over $B$ is then specified by a surjective homomorphism $\rho: \pi_1(Y \setminus B) \to G$. The preimage $f^{-1}(B)$ is the {\em ramification set} and the elements are {\em ramification points}. A point $x \in X$ is ramified if and only if $\Stab_G(x)$ is nontrivial; in this case, the {\em order} of $x$ is defined to be the order of $\Stab_G(x)$. 

When $Y = S^2$ is a sphere, this can be further combinatorialized. Enumerate $B = \{b_1, \dots, b_n\}$, and choose an identification
\[
\pi_1(S^2 \setminus B) \cong \pair{a_1, \dots, a_n \mid a_1\dots a_n = 1};
\] 
here each $a_i$ runs from a basepoint $p \in S^2$ to a small loop around $b_i$. The {\em local monodromy} at $b_i$ is the corresponding element $\rho(a_i) \in G$. Without loss of generality we can assume that each $\rho(a_i) \ne 1$. The {\em monodromy vector} is the associated tuple $(\rho(a_1), \dots, \rho(a_n)) \in G^n$. Note that necessarily $\rho(a_1) \dots \rho(a_n) = 1$, and conversely, any such $n$-tuple gives rise to a branched $G$-cover.

For the purposes of this paper we will only be concerned with the case $G = \Z/6\Z$, and we adopt some further notation special to this situation. With the branch set $B \subset S^2$ fixed, we observe that each branch point $b_i$ has corresponding order $|\rho(a_i)| \in \{2,3,6\}$. Define $p$ (resp. $q$ or $r$) as the number of points of order $6$ (resp. $3$ or $2$). We define the {\em branching vector} as the tuple $(p,q,r)$. The lemma below records the Riemann--Hurwitz formula specialized to this setting.

\begin{lem}\label{RH}
Let $f: \Sigma_g \to S^2$ be a $\Z/6\Z$-branched covering with branching vector $(p,q,r)$. Then
\[
5p + 4q + 3r = 10+2g.
\]
\end{lem}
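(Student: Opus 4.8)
The plan is to apply the Riemann--Hurwitz formula directly, exploiting the fact that a cyclic branched covering is Galois in order to read off the ramification contribution at each branch point from the branching vector $(p,q,r)$.

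First I would recall that for any degree-$d$ branched covering $f \colon X \to Y$ of closed oriented surfaces, Riemann--Hurwitz gives
\[
\chi(X) = d\,\chi(Y) - \sum_{x \in f^{-1}(B)} \bigl(e_x - 1\bigr),
\]
where $e_x$ is the ramification index of $f$ at $x$. In our situation $d = |\Z/6\Z| = 6$, $Y = S^2$ so $\chi(Y) = 2$, and $\chi(X) = \chi(\Sigma_g) = 2 - 2g$, so everything comes down to evaluating the ramification sum.

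Next I would analyze the fiber over a single branch point $b_i \in B$ whose local monodromy $\rho(a_i)$ has order $m \in \{2,3,6\}$. Since the covering is Galois with group $\Z/6\Z$, this group acts transitively on $f^{-1}(b_i)$, and the stabilizer of a point of the fiber is the cyclic subgroup generated by $\rho(a_i)$, of order $m$; consequently the ramification index at each point of $f^{-1}(b_i)$ equals $m$. By the orbit--stabilizer theorem the fiber consists of exactly $6/m$ points, so the total contribution of this fiber to the ramification sum is $(6/m)(m - 1) = 6 - 6/m$, which equals $5$, $4$, or $3$ according as $m = 6$, $3$, or $2$. Summing over all branch points and invoking the definitions of $p$, $q$, $r$ yields
\[
\sum_{x \in f^{-1}(B)} (e_x - 1) = 5p + 4q + 3r.
\]

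Finally I would substitute into Riemann--Hurwitz: $2 - 2g = 6 \cdot 2 - (5p + 4q + 3r)$, which rearranges at once to $5p + 4q + 3r = 10 + 2g$. There is no real obstacle; the only step requiring any care is the determination of the fiber structure over a branch point --- namely that a local monodromy element of order $m$ gives rise to $6/m$ ramification points each of index $m$ --- and this is immediate from the orbit--stabilizer theorem applied to the deck-group action on the fiber.
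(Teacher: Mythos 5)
Your proof is correct and is precisely the standard Riemann--Hurwitz computation that the paper itself invokes: the paper states Lemma \ref{RH} without proof, describing it only as ``the Riemann--Hurwitz formula specialized to this setting,'' and your fiber-by-fiber analysis (a local monodromy of order $m$ giving $6/m$ points of ramification index $m$, hence a contribution of $6 - 6/m$) is exactly the intended argument. Nothing to add.
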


\para{Monodromy tuples} As a final specialization, we can shorten our notation for the monodromy vector at the cost of possibly re-ordering the elements of $B$. Suppose that $1 \in \Z/6\Z$ appears $a$ times, $2 \in \Z/6\Z$ appears $b$ times, etc. Up to a re-ordering of $B$, this data can be captured in the {\em monodromy tuple}. To make the computation of the associated $p,q,r$ more transparent, we order the elements of $\Z/6\Z$ according to their group-theoretic order. Thus a monodromy tuple is a symbol of the following form:
\begin{equation}\label{equation:tuple}
1^a\ 5^{p-a}\ 2^b\ 4^{q-b}\ 3^r.
\end{equation}

\subsection{The model surfaces} The elements $\alpha_g$ of Theorem \ref{main} will be constructed as deck transformations associated to regular $\Z/6\Z$ covers of $S^2$ as in the previous subsection. We will require different constructions for the three different residue classes $g \pmod 3$ and a special construction for $g = 3$. Below, we specify $k \ge 0$. As the final column shows, for $g \ne 3$, the power $\alpha_g^3$ has strictly fewer than $2g+2$ fixed points and hence is not hyperelliptic.

\begin{equation}\label{tuple}
\begin{array}{|c|c|c|c|}
\hline
g		& (p,q,r)		& \text{tuple}     & \text{number of branched points of $\alpha_g^3$}  
\\ \hline
2+3k		& (2,1,2k)		& 1^2\ 4\ 3^{2k}		&      2k\times 3+2<2g+2\\ 
3 		& (2,0,2)		& 1\ 5\ 3^{2} 		&	8=2g+2 \\
3 + 3(k+1)& (3,1,2k+1)	& 1^2\ 5\ 2\ 3^{2k+1} & (2k+1) \times 3 + 3 < 2g+2 \\ 
4 + 3k	& (3, 0,2k+1)	& 1^3\ 3^{2k+1}  	&(2k+1)\times 3+3<2g+2 \\ \hline
\end{array}
\end{equation}

\begin{figure}[]
\labellist
\small
\pinlabel $g\equiv2\pmod3$ at 100 250
\pinlabel $1$ [bl] at 158.73 388.31
\pinlabel $2$ [br] at 56.69 399.65
\pinlabel $3$ [br] at 0.00 320.28
\pinlabel $4$ [tr] at 36.85 229.58
\pinlabel $5$ [bl] at 198.41 315.28
\pinlabel $6$ [tl] at 195.57 299.78
\pinlabel $1$ [br] at 36.85 388.31
\pinlabel $2$ [tr] at 0.00 300.78
\pinlabel $3$ [tr] at 59.52 215.41
\pinlabel $4$ [tl] at 158.73 226.75
\pinlabel $5$ [tl] at 136.05 215.41
\pinlabel $6$ [bl] at 138.88 399.65
\pinlabel $x$ [tr] at 141.72 385.48
\pinlabel $g\equiv0\pmod3$ at 220 40
\pinlabel $1$ [bl] at 277.77 175.73
\pinlabel $2$ [br] at 178.57 189.90
\pinlabel $3$ [br] at 119.04 110.54
\pinlabel $4$ [tr] at 155.89 17.01
\pinlabel $5$ [tl] at 260.76 5.67
\pinlabel $6$ [tl] at 317.45 85.03
\pinlabel $1$ [br] at 155.89 178.57
\pinlabel $2$ [tr] at 119.04 85.03
\pinlabel $3$ [tr] at 178.57 5.67
\pinlabel $4$ [tl] at 277.77 17.01
\pinlabel $5$ [bl] at 317.45 110.54
\pinlabel $6$ [bl] at 257.93 189.90
\pinlabel $x$ [tr] at 263.60 175.73
\pinlabel $g\equiv1\pmod3$ at 340 250
\pinlabel $1$ [bl] at 397.77 385.73
\pinlabel $2$ [br] at 298.57 394.90
\pinlabel $3$ [br] at 239.04 315.54
\pinlabel $4$ [tr] at 275.89 232.01
\pinlabel $5$ [tl] at 380.76 219.67
\pinlabel $6$ [tl] at 437.45 297.03
\pinlabel $1$ [br] at 275.89 383.57
\pinlabel $2$ [tr] at 239.04 297.03
\pinlabel $3$ [tr] at 298.57 217.67
\pinlabel $4$ [tl] at 397.77 229.01
\pinlabel $5$ [bl] at 437.45 315.54
\pinlabel $6$ [bl] at 377.93 394.90
\pinlabel $x$ [tr] at 383.60 390.73

\tiny
\pinlabel $1'$ [b] at 121.88 402.48
\pinlabel $2'$ [tl] at 167.23 235.25
\pinlabel $2'$ [tr] at 5.67 269.27
\pinlabel $4'$ [br] at 28.34 379.81
\pinlabel $5'$ [bl] at 195.57 331.62
\pinlabel $5'$ [tr] at 85.03 209.74
\pinlabel $1'$ [tl] at 175.73 246.59
\pinlabel $3'$ [tr] at 0 283.44
\pinlabel $3'$ [b] at 110.54 405.15
\pinlabel $4'$ [bl] at 189.90 345.79
\pinlabel $6'$ [tr] at 70.86 212.58
\pinlabel $6'$ [br] at 19.84 368.47
\pinlabel $1'$ [b] at 240.92 195.57
\pinlabel $6'$ [br] at 144.55 167.23
\pinlabel $5'$ [tr] at 121.88 70.86
\pinlabel $4'$ [tl] at 192.74 2.83
\pinlabel $3'$ [tl] at 289.11 28.34
\pinlabel $2'$ [bl] at 314.62 121.88
\pinlabel $5'$ [br] at 136.05 153.06
\pinlabel $4'$ [tr] at 127.55 56.85
\pinlabel $3'$ [t] at 209.74 0.00
\pinlabel $2'$ [tl] at 297.61 42.52
\pinlabel $1'$ [bl] at 308.95 141.72
\pinlabel $6'$ [b] at 226.75 195.57
\pinlabel $1''$ [b] at 215.92 195.57
\pinlabel $3''$ [b] at 201.75 195.57
\pinlabel $4''$ [br] at 134.55 140.23
\pinlabel $6''$ [br] at 131.05 130.06
\pinlabel $5''$ [tl] at 216.74 2.83
\pinlabel $2''$ [tl] at 305.11 55.34
\pinlabel $6''$ [bl] at 299.62 146.88
\pinlabel $1''$ [tr] at 142.55 38.85
\pinlabel $4''$ [t] at 234.74 5.00
\pinlabel $3''$ [tl] at 307.61 66.52
\pinlabel $5''$ [bl] at 291.95 158.72
\pinlabel $2''$ [tr] at 135.88 47.86

\pinlabel $1'$ [b] at 360.92 405.57
\pinlabel $2'$ [br] at 264.55 377.23
\pinlabel $3'$ [tr] at 241.88 280.86
\pinlabel $4'$ [tl] at 307.74 212.83
\pinlabel $5'$ [tl] at 409.11 238.34
\pinlabel $6'$ [bl] at 434.62 331.88
\pinlabel $1'$ [br] at 256.05 363.06
\pinlabel $2'$ [tr] at 250.55 266.85
\pinlabel $3'$ [t] at 329.74 210.00
\pinlabel $4'$ [tl] at 417.61 252.52
\pinlabel $5'$ [bl] at 428.95 351.72
\pinlabel $6'$ [b] at 346.75 405.57

\endlabellist
\includegraphics[scale=1]{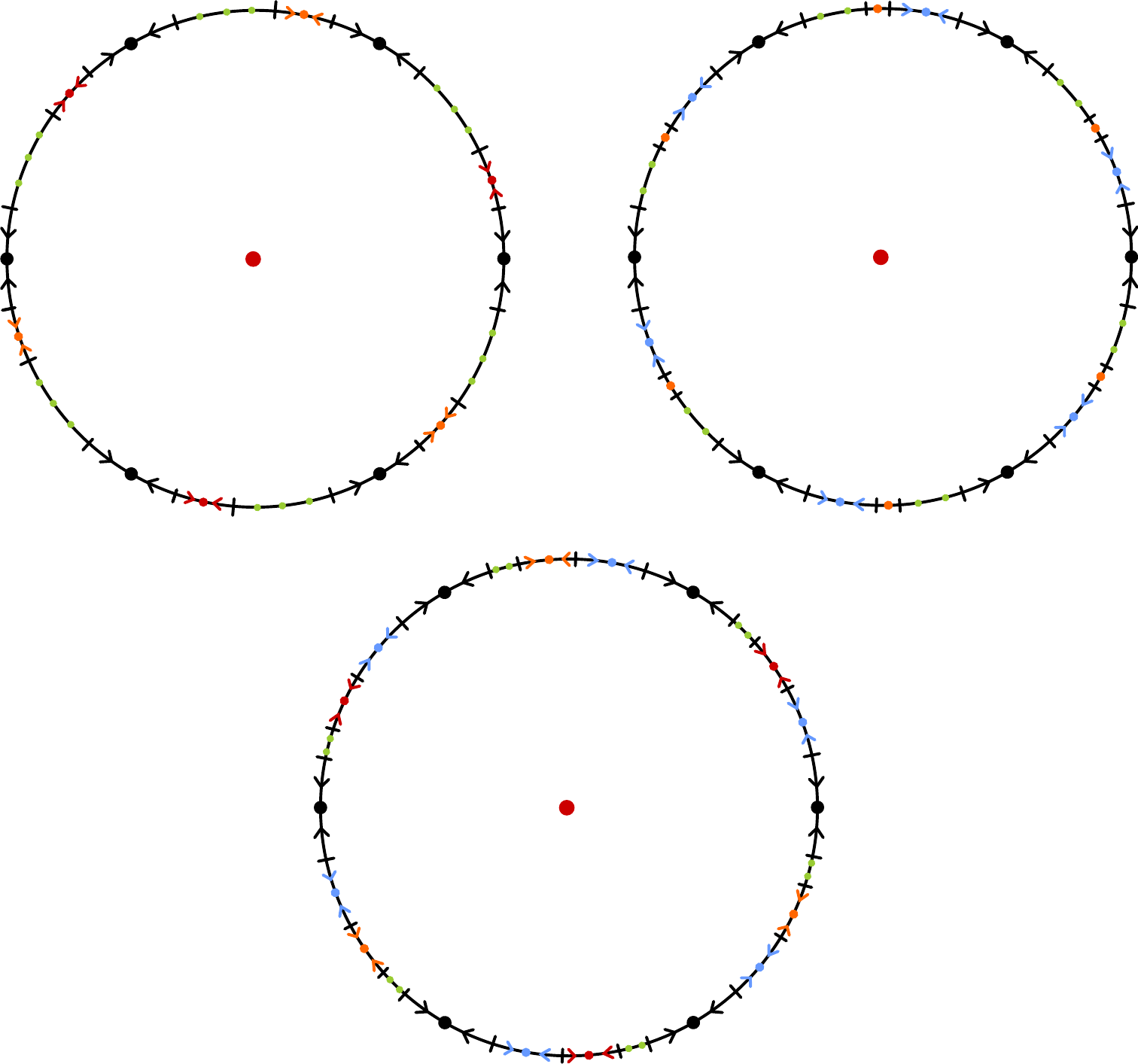}
\caption{The model surfaces for $g \ne 3$: each is constructed by taking the disk $D$ and identifying the specified edge segments of $\partial D$. In each case, $\alpha_g$ is represented as a rotation by $2\pi/6$ about the center of the disk. We adopt the convention that we do not specify labeling data for points where the local monodromy is of order $2$; in each such case, the point and its corresponding segment of $\partial D$ is identified with the segment directly opposite on $D$. This is true in particular for the $k \ge 0$ points in green in each sector, which correspond to the portion of the monodromy tuple of the form $3^{2k}$. }
\label{figure:models}
\end{figure}
\newpage

To represent $\Sigma_{g,1}$ with its symmetry $\alpha_g$ as in \eqref{tuple}, we adopt the models shown in Figure \ref{figure:models}, where $\Sigma_{g,1}$ is given as a (marked) disk $D$ with edge identifications. See Figure \ref{figure:models} and its caption for a detailed discussion. We emphasize that the marked point $x \in \Sigma_{g,1}$ is {\em not} the fixed point at the center, but rather one of the fixed points on $\partial D$ (labeled, and drawn with a heavy dot). The model surface for $g = 3$ is given in Figure \ref{figure:genus3model}.

For the purpose of later discussion, we observe here a simple property of this construction.

\begin{defn}[Edge type]\label{definition:edgetype}
Let $e$ be an edge of $\partial D$. By construction, exactly one endpoint of $e$ is a ramification point. We say that $e$ is {\em type $p$} (resp. {\em type $q$, type $r$}) if this ramification point has order $6$ (resp. $3$, $2$). 
\end{defn}

\section{Chords and convexity} \label{section:chords}

In this section we develop some language for discussing a special class of curves and subsurfaces on the model surfaces. This is based around an ad-hoc identification of the disk $D$ with the {\em hyperbolic} disk $\mathbb D^2$. We will find it convenient to consider representatives for curves on $\Sigma_{g,1}$ as geodesics on $\mathbb D^2$, and especially to consider the notion of convexity in $\mathbb D^2$. We emphasize here that we are using $\mathbb D^2$ in a nonstandard way: $\mathbb D^2$ is {\em not} playing the role of the universal cover of $\Sigma_{g,1}$. Rather, we are viewing $\Sigma_{g,1}$ as a {\em topological quotient} of $\mathbb D^2$ under a set of identifications of portions of $\partial \mathbb D^2$. The geometry of $\mathbb D^2$ will provide us with a convenient framework in which to prove Theorem \ref{main}. 

\subsection{Chordal curves}
The first special structure inherited from imposing the hyperbolic metric on $D$ is a privileged (finite) set of simple closed curves: those that can be represented as single geodesics on $\mathbb D^2$. 

\begin{defn}[Chordal curve]
Let $\Sigma_{g,1}$ be given for $g \ge 2$, and let $D$ be the associated disk as shown in Figure \ref{figure:models}; we identify $D$ with the hyperbolic disk $\mathbb D^2$. A {\em chordal curve} is a simple closed curve $c \subset \Sigma_{g,1}$ that can be represented as a single geodesic on $\mathbb D^2$. A chordal curve is {\em basic} if its endpoints can be taken to lie on the {\em interiors} of the identified portions of $\partial D$. The {\em type} of a basic chordal curve is defined to be the type of the corresponding edge of $\partial \mathbb D^2$ in the sense of Definition \ref{definition:edgetype}. 
\end{defn}
See Figure \ref{figure:chords} for some examples and non-examples of chordal curves. \\

\begin{figure}[ht]
\labellist
\small
\pinlabel $a$ at 140 155
\pinlabel $b$ at 55 90
\pinlabel $c$ at 105 40
\pinlabel $d$ at 300 155
\pinlabel $d$ at 343 40
\pinlabel $e$ at 347 155
\pinlabel $x$ at 377 187
\endlabellist
\includegraphics[scale=1]{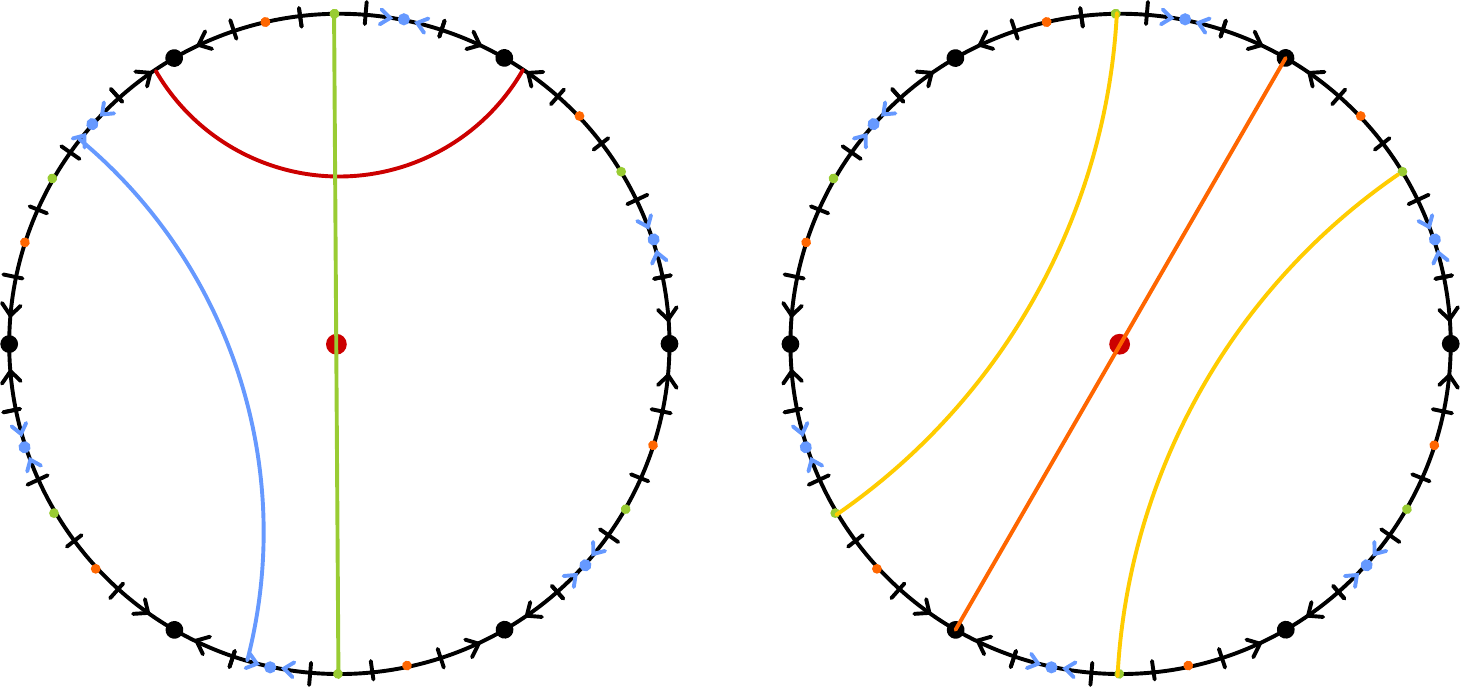}
\caption{At left, three chordal curves $a,b,c$ on $\Sigma_{5,1}$ of types $p,q,r$, respectively. At right, two curves $d,e$ that are not chordal. $d$ is not chordal because it cannot be represented as a single segment on $D$, and $e$ is not chordal because it passes through the marked point $x$.}
\label{figure:chords}
\end{figure}

\begin{figure}[ht]
\labellist
\small
\pinlabel $c$ at 115 145
\pinlabel $\alpha_5^2(c)$ at 60 100
\pinlabel $\alpha_5^4(c)$ at 140 100
\pinlabel $d$ at 105 40
\endlabellist
\includegraphics[scale=1]{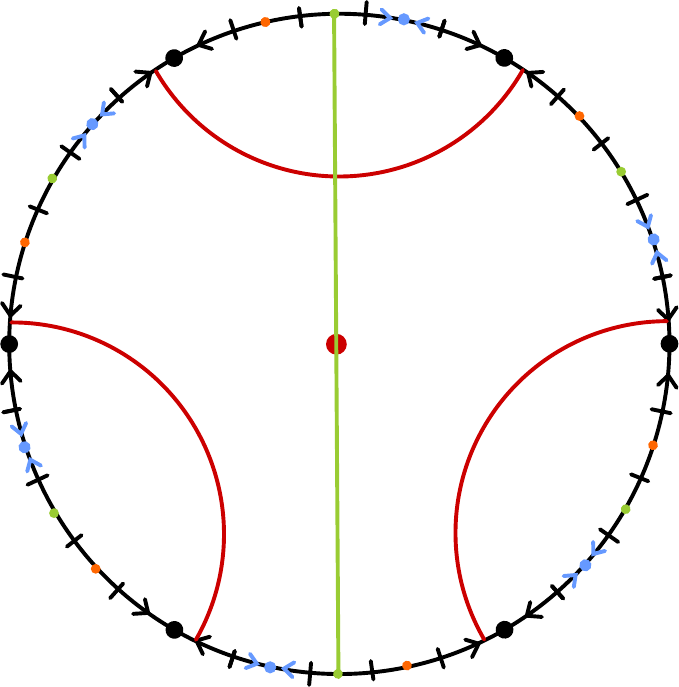}
\caption{The curves $c, \alpha_g^2(c), \alpha_g^4(c), d$ of Lemma \ref{lemma:commtrick}, illustrated for $g = 5$. }
\label{figure:commtrick}
\end{figure}

An individual basic chordal curve is not (in general) invariant under nontrivial powers of $\alpha_g$. Lemma \ref{lemma:commtrick} shows that nevertheless, by using both $C(\alpha_g^2)$ and $C(\alpha_g^3)$, the symmetry can be broken and the associated Dehn twists can be exhibited as elements of $\Gamma$. 

\begin{lem}\label{lemma:commtrick}
Let $c$ be a basic chordal curve of type $p$ or $r$. If $g \ge 3$, then $T_c \in \Gamma$. 
\end{lem}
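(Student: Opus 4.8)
The plan is to exhibit $T_c$ as a commutator (or short product) of elements, each of which is visibly centralized by $\alpha_g^2$ or $\alpha_g^3$, exploiting the ``commutator trick'' suggested by Figure \ref{figure:commtrick}. The key observation is that although a single basic chordal curve $c$ of type $p$ or $r$ is not $\alpha_g$-invariant, the rotational symmetry lets us compare $c$ with its images under the powers of $\alpha_g$ that fix the relevant ramification point. If $c$ is type $p$, its endpoints lie on an edge whose distinguished ramification point has order $6$, so $\alpha_g$ fixes that point; if $c$ is type $r$, the endpoint ramification point has order $2$, so $\alpha_g^3$ fixes it. In either case there is a nontrivial power $\alpha_g^j$ of $\alpha_g$ that fixes a point near the endpoints of $c$, and one can arrange $c, \alpha_g^j(c), \alpha_g^{2j}(c), \dots$ to be configured in a controlled way — pairwise disjoint, or meeting in a single point — as in Figure \ref{figure:commtrick}.

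Concretely, I would proceed as follows. First, for $c$ of type $p$: the three curves $c$, $\alpha_g^2(c)$, $\alpha_g^4(c)$ all emanate from (a neighborhood of) the order-$6$ ramification point at the shared endpoint, rotated by $2\pi/3$ increments; I claim one can choose the geodesic representative of $c$ so that these three curves are pairwise disjoint, bounding (together with an auxiliary curve $d$) a subsurface on which the symmetry $\alpha_g^2$ acts. Then $\alpha_g^2 \in C(\alpha_g^2)$ conjugates $T_c$ to $T_{\alpha_g^2(c)}$, etc. Second, I would find an element $\beta \in C(\alpha_g^3)$ — built from a Dehn twist about a curve $d$ that is symmetric under $\alpha_g^3$ and that has the right geometric intersection pattern with the orbit $\{c, \alpha_g^2(c), \alpha_g^4(c)\}$ — such that the commutator $[\beta, \alpha_g^2]$ (or a similar expression combining one generator from each centralizer) equals $T_c$ up to a product of the twists $T_{\alpha_g^{2}(c)}^{\pm 1}$, which themselves lie in $\Gamma$ by the same argument applied to $\alpha_g^{2}(c)$ (a conjugate of $c$, hence also basic chordal of the same type). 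Unwinding, $T_c \in \Gamma$. The type $r$ case is analogous but uses $\alpha_g^3$ in place of $\alpha_g^2$ to break symmetry, now comparing $c$ with its single image $\alpha_g^3(c)$ and using an element of $C(\alpha_g^2)$; the hypothesis $g \ge 3$ is what guarantees there is enough room on the surface to place the auxiliary curve $d$ disjointly (for $g = 2$ the configurations degenerate).

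The main obstacle I anticipate is purely combinatorial-geometric: verifying that for \emph{every} basic chordal curve of type $p$ or $r$ — across all four rows of the table \eqref{tuple} and all $k \ge 0$ — one can simultaneously realize $c$ and its relevant $\alpha_g$-translates as geodesics with the prescribed disjointness/intersection pattern, and locate the auxiliary curve $d$ with the correct intersection number and the correct centralizing symmetry. This is where the hyperbolic-disk model and the notion of convexity from Section \ref{section:chords} should do the work: convexity of the relevant regions will force the geodesic representatives into the desired position, so the argument becomes ``draw the picture and invoke convexity'' rather than a case check. A secondary point to be careful about is bookkeeping of orientations and the precise power of each twist appearing, so that the algebra of commutators actually collapses to $T_c$ rather than to $T_c$ times an uncontrolled remainder; I expect the standard lantern/chain relations, or simply the relation $T_{\phi(c)} = \phi T_c \phi^{-1}$, to suffice once the curves are in position.
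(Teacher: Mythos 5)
You have correctly identified the main ingredients for the type $p$ case: the product $T_c T_{\alpha_g^2(c)} T_{\alpha_g^4(c)}$ lies in $C(\alpha_g^2)$, and an auxiliary $\alpha_g^3$-symmetric curve $d$ with a controlled intersection pattern is used to break the symmetry. But the step where you actually extract $T_c$ has a genuine gap. You propose that the relevant expression equals $T_c$ ``up to a product of the twists $T_{\alpha_g^2(c)}^{\pm 1}$, which themselves lie in $\Gamma$ by the same argument applied to $\alpha_g^2(c)$.'' This is circular: the same argument applied to $\alpha_g^2(c)$ would only give $T_{\alpha_g^2(c)}$ modulo the twists about the other curves in the orbit, including $T_c$ itself, so the induction never terminates. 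The paper's proof avoids any remainder entirely: it chooses $d$ to be the diameter through a type $r$ edge lying between the two type $p$ edges joined by $c$ (this is where $g \ge 3$ is used), so that $i(c,d)=1$ while $i(\alpha_g^2(c),d)=i(\alpha_g^4(c),d)=0$. Conjugating $T_d \in \Gamma$ by the triple product then yields exactly $T_c T_d T_c^{-1} \in \Gamma$ with no leftover terms, and the braid relation $T_cT_dT_c = T_dT_cT_d$ (valid since $i(c,d)=1$) converts $T_cT_dT_c^{-1} \in \Gamma$ together with $T_d \in \Gamma$ into $T_c \in \Gamma$. Your closing remark that ``standard relations should suffice'' gestures at this, but the braid relation is the specific, essential final step and needs to be invoked explicitly.

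Separately, your treatment of the type $r$ case overcomplicates something that is immediate. By the edge-identification convention for order-$2$ ramification points, a basic chordal curve of type $r$ runs between a point of $\partial D$ and its image under rotation by $\pi$, i.e.\ it is a diameter of $D$; hence it is literally invariant under $\alpha_g^3$ and $T_c \in C(\alpha_g^3) \le \Gamma$ with no further work. Your proposed comparison of $c$ with ``its single image $\alpha_g^3(c)$'' starts from the false premise that these are distinct curves, so that branch of your argument, as written, does not get off the ground — though the correct observation is strictly easier than what you attempted.
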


\begin{proof}
If $c$ is of type $r$, then $c$ can be represented as a diameter of $D$ and hence $c \in C(\alpha_g^3) \le \Gamma$. Suppose now that $c$ is of type $p$, connecting edges $e_1, e_2$ of $\partial D$ of type $p$. In each of the model surfaces, if $g \ge 3$, then between $e_1$ and $e_2$ is an edge $e_3$ of type $r$ (see Figure \ref{figure:commtrick}). Consider the associated basic chordal curve $d$ of type $r$. As discussed above, $T_d \in \Gamma$, and also
\[
T_c T_{\alpha_g^2(c)} T_{\alpha_g^4(c)} \in C(\alpha_g^2) \le \Gamma. 
\]
By construction, the geometric intersection $i(c,d) = 1$, while also
\[
i(\alpha_g^2(c), d) = i(\alpha_g^4(c), d) = 0.
\]
 Thus, 
\[
(T_c T_{\alpha_g^2(c)} T_{\alpha_g^4(c)}) T_d (T_c T_{\alpha_g^2(c)} T_{\alpha_g^4(c)})^{-1}  = T_c T_d T_c^{-1} \in \Gamma.
\]

On the other hand, since $i(c,d) = 1$, the braid relation implies that 
\[
T_c T_d T_c^{-1} = T_d^{-1} T_c T_d \in \Gamma,
\]
and hence $T_c \in \Gamma$ as well.
\end{proof}

\subsection{\boldmath $D$-convexity}
The second piece of hyperbolic geometry we borrow is the notion of convexity. In the proof of Theorem \ref{main}, we will proceed inductively, showing that $\Gamma$ contains the mapping class groups for an increasing union of subsurfaces. In the inductive step, we will need to control the topology of the enlarged subsurface relative to the original; we accomplish this by restricting our attention to subsurfaces that are {\em convex} from the point of view of the hyperbolic metric on $\mathbb D^2$.

\begin{defn}[$D$-convex hull, $D$-convexity]
Let $\mathcal C = \{c_1, \dots, c_n\}$ be a collection of simple closed curves on $\Sigma_{g,1}$. Represent each $c_i$ as a union of chords on $D$, i.e. as a union of geodesics on the hyperbolic disk $\mathbb{D}^2$. The {\em $D$-convex hull} of $\mathcal C$ is the subsurface $\Hull(\mathcal C) \subseteq \Sigma_{g,1}$ constructed as follows: first, take a closed regular neighborhood of $\bigcup c_i$ (viewed as a subset of $D$), take the convex hull of this set in the hyperbolic metric on $\mathbb{D}^2$, project onto $\Sigma_{g,1}$, and then fill in any inessential boundary components. 

A subsurface $S \subset \Sigma_{g,1}$ is said to be {\em $D$-convex} if it can be represented as a convex region on $D$ with respect to the hyperbolic metric on $\mathbb{D}^2$.
\end{defn}

\begin{lem}\label{lemma:hull}
Let $\mathcal C_g$ denote the set of basic chordal curves of type $p$ and $r$ on $\Sigma_{g,1}$. Then $\Hull(\mathcal C_g) = \Sigma_{g,1}$ for all $g \ge 2$.
\end{lem}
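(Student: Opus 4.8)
The plan is to argue that the $D$-convex hull of $\mathcal C_g$ cannot be a proper subsurface of $\Sigma_{g,1}$, by checking that it contains every essential feature of $D$. Concretely, I would first observe that since the basic chordal curves of type $p$ and $r$ come in $\alpha_g$-orbits, the $D$-convex hull $\Hull(\mathcal C_g)$ is an $\alpha_g$-invariant convex region on $\mathbb D^2$; in particular it contains the center of the disk. Next I would verify, model-by-model using the tuples in \eqref{tuple}, that every sector of $D$ (i.e., every fundamental domain for the rotation $\alpha_g$) contains at least one endpoint of a basic chordal curve of type $p$ or $r$: in all four cases the branching vector has $p \ge 2$ and, when $g \ge 3$, also $r \ge 1$, while for $g = 2 + 3k$ with $k = 0$ we still have $p = 2$ so two type-$p$ edges are available. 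Running geodesics between these edges (and their $\alpha_g$-translates) produces chords that, together with their convex hull, sweep out all of $D$ up to the identified boundary portions.

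The key step is to show that after taking the convex hull and filling inessential boundary components, no essential boundary curve survives — equivalently, that the complement of $\Hull(\mathcal C_g)$ in $\Sigma_{g,1}$ is empty rather than a union of disks or a once-punctured disk around $x$. For this I would use the edge-identification structure: each edge $e$ of $\partial D$ of type $p$ or $r$ is an identified portion of $\partial \mathbb D^2$, so a basic chordal curve of that type, drawn as a geodesic with both endpoints in the interior of identified edges, together with the identification, cuts off a definite region of $D$. Taking the union over all such chords and all powers of $\alpha_g$, the convex hull in $\mathbb D^2$ of this union is a convex polygon whose vertices lie on $\partial \mathbb D^2$ and whose edges include (segments of) every identified arc; hence its image in $\Sigma_{g,1}$ misses only neighborhoods of the type-$q$ ramification points and of the marked point $x$, all of which bound disks (or once-punctured disks) and are filled in. I would handle the genus-$3$ model separately using Figure \ref{figure:genus3model}, where the tuple $1\ 5\ 3^2$ again supplies two type-$p$ edges and two type-$r$ edges, enough to run the same argument.

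The main obstacle I anticipate is bookkeeping: making precise, uniformly across the residue classes, the claim that the chosen chords and their $\alpha_g$-orbits have convex hull equal to all of $D$ and that the leftover complementary regions are inessential. This is essentially a finite check in each of the four model pictures of Figure \ref{figure:models} (plus Figure \ref{figure:genus3model}), and the cleanest way to present it is probably to exhibit, in each model, an explicit finite subcollection of $\mathcal C_g$ whose $D$-convex hull is visibly all of $\Sigma_{g,1}$, then invoke monotonicity of $\Hull$ under inclusion of curve collections. The genuinely geometric input — that the convex hull of a neighborhood of a geodesic is controlled, and that projecting to $\Sigma_{g,1}$ and filling inessential components is well-behaved — is exactly what the $D$-convexity formalism of this section was set up to provide, so no new ideas beyond careful case analysis should be required.
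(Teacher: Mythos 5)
Your proposal is correct and follows essentially the same route as the paper: the paper's proof is exactly a model-by-model inspection (Figures \ref{figure:models}, \ref{figure:genus3model}, and \ref{figure:hull}) showing that the chords of type $p$ and $r$ have convex hull all of $D$ except for complementary regions concentrated near the type-$q$ ramification points (the components $d_1, d_2$ in Figure \ref{figure:hull}), which are checked to be inessential and hence filled in. Your additional observations ($\alpha_g$-equivariance of the hull, monotonicity under enlarging the curve collection) are consistent with, and slightly more explicit than, the paper's argument.
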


\begin{proof}
For $g \equiv 1 \pmod 3$ this is clear from inspection of Figure \ref{figure:models}, since there are no edges of type $q$ at all; the case $g = 3$ similarly follows by inspection of Figure \ref{figure:genus3model}.  For $g \equiv 2 \pmod 3$, this is best seen by inspecting Figure \ref{figure:hull}. Here one must observe that the remaining boundary components $d_1$ and $d_2$ are in fact also both inessential and hence are filled in when constructing $\Hull(\mathcal C_g)$. For $g >3$ and $g \equiv 0 \pmod 3$, there is also exactly one family of edges of type $q$, and the same considerations as in the case $g \equiv 2 \pmod 3$ apply here as well.
\end{proof}

\begin{figure}[ht]
\labellist
\tiny
\pinlabel $A$ [bl] at 127.55 192.74
\pinlabel $A$ [tl] at 178.57 45.35
\pinlabel $B$ [tl] at 161.56 25.51
\pinlabel $B$ [tr] at 8.50 56.69
\pinlabel $C$ [tr] at 0.00 79.36
\pinlabel $C$ [b] at 104.87 198.41
\pinlabel $D$ [br] at 34.01 172.90
\pinlabel $D$ [bl] at 189.90 141.72
\pinlabel $E$ [bl] at 195.57 119.04
\pinlabel $E$ [tl] at 87.87 0.00
\pinlabel $F$ [tl] at 63.03 5.67
\pinlabel $F$ [bl] at 12.01 155.89
\pinlabel $d_1$ [tr] at 110.54 178.57
\pinlabel $d_1$ [bl] at 19.84 73.69
\pinlabel $d_1$ [br] at 158.73 45.35
\pinlabel $d_2$ [tr] at 175.73 124.71
\pinlabel $d_2$ [tl] at 39.68 153.06
\pinlabel $d_2$ [bl] at 82.20 19.84
\endlabellist
\includegraphics[scale=1]{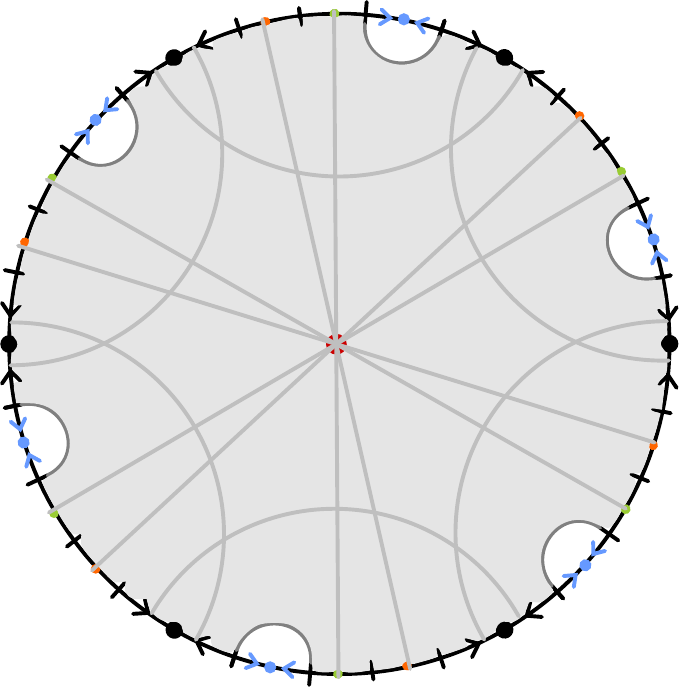}
\caption{The final step in constructing $\Hull(\mathcal C_g)$ for $g = 5$. The boundary components $d_1$ and $d_2$ are both inessential in $\Sigma_{g,1}$ and so are filled in when constructing $\Hull(\mathcal C_g)$.}
\label{figure:hull}
\end{figure}

\section{proof of Theorem \ref{main}} \label{section:proof}
We prove Theorem \ref{main} in Section \ref{subsection:proof}. The argument is inductive: we construct a sequence $S_0 \subset S_1 \subset \dots \subset S_k = \Sigma_{g,1}$ of subsurfaces and show that $\Mod(S_i) \le \Gamma$ for $i = 1, \dots, k$. The inductive step is fairly simple and relies on the notion of a ``stabilization'' of subsurfaces to be discussed in Section \ref{section:stab}. We consider separate base cases for the regimes $g \ge 4, g = 3$, and $g = 2$; these arguments are deferred to Sections \ref{section:ge4} -- \ref{section:2}.

\subsection{Stabilizations}\label{section:stab}

\begin{defn}[Stabilization]
Let $S \subset \Sigma$ be a subsurface, and let $c \subset \Sigma$ be a simple closed curve such that $c \cap S$ is a single arc (the endpoints of $c$ do not necessarily lie on distinct boundary components of $S$). The {\em stabilization of $S$ along $c$} is the subsurface $S^+$ constructed as a regular neighborhood of $S \cup c$ inside $\Sigma$.
\end{defn}

Stabilizations are useful because they allow for simple inductive generating sets for the associated mapping class groups. 

\begin{lem}[Stabilization]\label{prop:stab}
Let $S \subset \Sigma$ be a subsurface of genus at least $2$, and let $S^+$ denote the stabilization of $S$ along the simple closed curve $c$. Then
\[
\Mod(S^+) = \pair{T_c, \Mod(S)}.
\]
\end{lem}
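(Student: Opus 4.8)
The plan is to identify $S^+$ topologically and then invoke a known generating result for the mapping class group of a surface obtained by attaching a one-handle. First I would analyze how $S^+$ sits inside $\Sigma$ relative to $S$. Since $c \cap S$ is a single arc, say with endpoints on boundary components of $S$, there are essentially two cases depending on whether the two endpoints of this arc lie on the same boundary component of $S$ or on two distinct ones. In the first case, the part of $c$ outside $S$ together with the arc $c \cap S$ bounds a band attached to a single boundary circle of $S$, so $S^+$ is obtained from $S$ by attaching a one-handle to one boundary component; this either raises the genus by one (adding two boundary components worth of identification) or splits a boundary component in two, depending on orientation, but in all cases $S^+$ deformation retracts onto $S \cup c$. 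In the second case the band connects two distinct boundary circles of $S$, fusing them into one. In every case, $S^+$ is a regular neighborhood of $S \cup c$, and the key structural fact is that $\pi_1(S^+)$ is generated by $\pi_1(S)$ together with the loop $c$, so $H_1(S^+)$ is spanned by $H_1(S)$ and $[c]$.

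The main step is then the generation statement itself. I would argue that $\Mod(S^+)$ is generated by $\Mod(S)$ (included via the obvious inclusion $S \hookrightarrow S^+$, extending by the identity on the complementary band) together with the single Dehn twist $T_c$. The cleanest route is to use the Alexander method / the standard fact that the mapping class group of a surface with boundary is generated by Dehn twists about a collection of curves that fill, combined with an induction on complexity: a one-handle stabilization adds exactly one new ``direction'' to the curve complex, which is detected by $T_c$. Concretely, I would take a known finite generating set for $\Mod(S^+)$ by Dehn twists about curves $\gamma_1, \dots, \gamma_m$ (e.g. a Humphries-type or Lickorish-type generating set adapted to the handle decomposition $S^+ = S \cup (\text{band})$), arrange that all but one of these curves can be isotoped into $S$ — hence their twists lie in $\Mod(S)$ — and that the remaining twist is $T_c$ up to conjugation by elements of $\langle T_c, \Mod(S)\rangle$ already accounted for. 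Here the hypothesis that $S$ has genus at least $2$ is what guarantees enough room inside $S$ to realize the needed curves and the relations (the chain relations / lantern relations) entirely within $\Mod(S)$.

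The inclusion $\langle T_c, \Mod(S)\rangle \subseteq \Mod(S^+)$ is trivial, so the content is the reverse inclusion, and the expected main obstacle is precisely verifying that a generating set of $\Mod(S^+)$ can be chosen so that every generator except one lies in $\Mod(S)$. The subtlety is that a ``half-twist'' or boundary-sliding phenomenon could in principle produce mapping classes of $S^+$ not obviously generated by twists supported in $S$ and $T_c$ — e.g. a mapping class permuting the boundary components created by the handle attachment, or a point-pushing-type map along $c$. I would handle this by noting that the relevant surfaces have no such exotic behavior once the genus is at least $2$: any homeomorphism of $S^+$ can, after composing with elements of $\Mod(S)$, be assumed to fix the band (hence restrict to a homeomorphism of $S$), because the genus-$\ge 2$ hypothesis lets one find a twist in $\Mod(S)$ realizing any needed change-of-coordinates move. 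An alternative, possibly more robust, approach that I would keep in reserve is to use the Birman exact sequence or the cutting exact sequence relating $\Mod(S^+)$ to $\Mod(S^+ \text{ cut along } c)$: cutting $S^+$ along $c$ recovers $S$ (with an extra boundary arc), and the kernel of the resulting map is generated by $T_c$ and the twist about the boundary curves — a standard computation. Either way, this verification step, rather than the topological identification of $S^+$, is where the real work lies.
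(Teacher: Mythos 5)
Your proposal follows essentially the same route as the paper: split into the two cases according to whether the arc $c \cap S$ meets one or two boundary components of $S$, identify the topological type of $S^+$, and then use the change-of-coordinates principle (with $g(S) \ge 2$) to extend $c$ to a Humphries-type generating set of $\Mod(S^+)$ all of whose other curves lie in $S$. The only quibble is that in the same-boundary-component case an orientation-preserving band attachment always splits the boundary circle in two and preserves the genus (there is no orientation-dependent alternative on an orientable surface), but this does not affect the argument.
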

\begin{proof}
There are two cases to consider: either $c$ enters and exits $S$ via the same boundary component, or else it enters along one component of $\partial S$ and exits along a distinct component. In the former, $S^+$ has the same genus as $S$ but gains an additional boundary component, and in the latter, $S^+$ has genus $g(S) + 1$ but one fewer boundary component. In either case, the change--of--coordinates principle for $S$ implies that $c$ can be extended to a configuration of curves $c_0 = c, \dots, c_n$ such that $c_i \subset S$ for $i >0$ and such that the associated twists generate $\Mod(S^+)$. For instance, one can take $c_0, \dots, c_n$ to be the Humphries generating set for $S^+$ and $c_1, \dots, c_n$ to be the Humphries generating set for $S$, so long as $g(S) \ge 2$. The result follows. \end{proof}

\subsection{Proof of Theorem \ref{main}}\label{subsection:proof}

The result for $g = 2$ will be established by separate methods in Section \ref{section:2}; we therefore assume $g \ge 3$. We will express $\Sigma_{g,1}$ as a sequence
\[
S_0 \subset S_1 \subset \dots \subset S_k = \Sigma_{g,1}
\]
of stabilizations of $S$ along curves in the set $\mathcal C_g$ of basic chordal curves of type $p$ and $r$. At each stage we will see that $\Mod(S_i) \le \Gamma$. 

\para{The base case} In Sections \ref{section:ge4} and \ref{section:3}, we will establish the following lemma.
\begin{lem}\label{prop:subsurface}
For each of the model surfaces shown in Figure \ref{figure:models} (excluding $g = 2$) as well as the model surface in genus $3$ shown in Figure \ref{figure:genus3model}, there is a $D$-convex subsurface $S_0 \subset \Sigma_{g,1}$ of genus $2$ such that $\Mod(S_0) \le \Gamma$.
\end{lem}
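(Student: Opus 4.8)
The task is to locate, inside each of the model surfaces of Figure \ref{figure:models} (for $g \ge 4$) and the genus-$3$ model of Figure \ref{figure:genus3model}, a $D$-convex genus-$2$ subsurface $S_0$ whose mapping class group is already contained in $\Gamma$. My strategy is to build $S_0$ by hand as the $D$-convex hull of a small, explicitly chosen collection of basic chordal curves of type $p$ and $r$, and then to verify two things: (i) the hull really is a genus-$2$ surface (with some number of boundary components), and (ii) $\Mod(S_0)$ is generated by twists that we have already shown lie in $\Gamma$.

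\textbf{Step 1: choosing the curves.} By Lemma \ref{lemma:commtrick}, for $g \ge 3$ every Dehn twist about a basic chordal curve of type $p$ or $r$ lies in $\Gamma$. So the plan is to pick curves $c_1, \dots, c_m \in \mathcal C_g$ — concretely, drawn as chords of $D$ in the relevant sector of Figure \ref{figure:models} — arranged in a chain-like configuration whose regular neighborhood, after taking the hyperbolic convex hull in $\mathbb D^2$ and projecting to $\Sigma_{g,1}$, is a genus-$2$ surface. A chain of five curves with consecutive geometric intersection number $1$ (and non-consecutive intersection $0$) fills a genus-$2$ surface with one boundary component, so I would aim to realize such a $5$-chain among the type-$p$/type-$r$ chords near the marked point $x$; the structure visible in Figures \ref{figure:chords} and \ref{figure:commtrick} already exhibits several such chords and their images under powers of $\alpha_g$, so the raw material is present. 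One does this sector-by-sector for the three residue classes $g \equiv 0,1,2 \pmod 3$ and separately for the $g=3$ model; because $\alpha_g$ is a rotation by $2\pi/6$ and the chords can be taken inside a single sector, the configuration is independent of $k$, which is what makes a finite check suffice.

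\textbf{Step 2: identifying the hull.} Having fixed the configuration, I would compute $\Hull(\{c_1, \dots, c_m\})$ directly from the edge identifications, exactly as in the proof of Lemma \ref{lemma:hull}: take the closed regular neighborhood in $D$, pass to the hyperbolic convex hull, project, and fill in inessential boundary. The convexity is essentially automatic since the $c_i$ are geodesic chords with endpoints on $\partial D$, so their regular neighborhood's convex hull is a convex polygon-like region; $D$-convexity of $S_0$ then holds by construction. The genus count is an Euler-characteristic bookkeeping exercise on the quotient polygon, and one must check (again as in Lemma \ref{lemma:hull}) that any leftover boundary curves are inessential and get filled, leaving precisely a genus-$2$ subsurface.

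\textbf{Step 3: generating $\Mod(S_0)$.} Finally, once $S_0$ is a genus-$2$ surface (with $b$ boundary components), I invoke a standard generation result: $\Mod$ of a genus-$2$ surface is generated by Dehn twists about the curves of a chain (the Humphries/Lickorish-type generators), together with $b-1$ more twists handling the extra boundary components if $b>1$. I would arrange the configuration in Step 1 so that \emph{these} generating curves are themselves among the $c_i$ (hence basic chordal of type $p$ or $r$), so that $\Mod(S_0) = \pair{T_{c_1}, \dots, T_{c_m}} \le \Gamma$ by Lemma \ref{lemma:commtrick}. The main obstacle is Step 1–2 in tandem: one must find a configuration of \emph{type-$p$/type-$r$} chords (type-$q$ chords are off-limits, since Lemma \ref{lemma:commtrick} does not cover them) that simultaneously fills a genus-$2$ piece \emph{and} contains a Humphries-type generating chain for that piece, and then confirm via a careful picture that the hull is exactly genus $2$ and not larger. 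This requires a separate, honest diagram for each of the four model surfaces, but each is a finite, $k$-independent check.
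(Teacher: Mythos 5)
Your proposal is a plan rather than a proof: the entire content of this lemma is the explicit exhibition of a configuration of curves on each model surface, and you defer exactly that to a ``finite, $k$-independent check'' that you never perform. Saying you ``would aim to realize such a $5$-chain among the type-$p$/type-$r$ chords'' does not establish that one exists, and there is concrete reason to doubt that it does with the toolbox you allow yourself. You restrict to \emph{basic chordal} curves of type $p$ or $r$, i.e.\ to curves whose twists are supplied by Lemma \ref{lemma:commtrick}. But you overlook the other, more direct source of twists in $\Gamma$: any curve invariant under $\alpha_g^2$ (resp.\ $\alpha_g^3$) has its twist in $C(\alpha_g^2)$ (resp.\ $C(\alpha_g^3)$) regardless of whether it is chordal. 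The paper's configurations rely essentially on such curves. For $g\ge 4$ the fifth curve of the chain is obtained by connect-summing $c_1$ and $c_3$ along a segment of $c_2$; it is not a chord at all, and its twist enters $\Gamma$ only because the curve is $\alpha_g^3$-invariant. For $g=3$ three of the six curves used ($c_1$, $c_5$ being $\alpha_g^3$-invariant and $c_3$ being $\alpha_g^2$-invariant) are likewise not basic chordal curves of type $p$ or $r$. Note also that all type-$r$ basic chords are diameters, hence pairwise intersecting, so at most two of them can appear in a chain; the supply of type-$p$ chords with the right crossing pattern is then quite limited, which is presumably why the authors had to break symmetry with invariant curves. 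Until you either exhibit a genuine $5$-chain of type-$p$/type-$r$ chords on each model (which I do not believe exists) or enlarge your toolbox to include invariant curves, the argument has a hole at its center.

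Two smaller points. First, a regular neighborhood of a $5$-chain is a genus-$2$ surface with \emph{two} boundary components, not one; the paper pins down the topology (and the sufficiency of the five twists as a Humphries-type generating set) by additionally arranging that $c_1\cup c_3\cup c_5$ bounds a pair of pants. If your hull comes out with extra boundary, your Step 3 requires additional generating twists about specific curves, each of which must again be certified to lie in $\Gamma$ --- a further unaddressed burden. Second, your Steps 2 and 3 (computing the hull, checking $D$-convexity, and invoking stabilization/Humphries generation) are sound in outline and consistent with how the paper proceeds for $g=3$, where $S_0$ is obtained from a smaller $S_0'$ by one stabilization along a type-$r$ chord; so the scaffolding is right, but the load-bearing construction is missing.
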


\para{The inductive step} Suppose that $S_i$ is given as a $D$-convex subsurface with $\Mod(S_i) \le \Gamma$. Suppose first that every curve $c_j \in \mathcal C_g$ is contained in $S_i$. Since $S_i$ is $D$-convex and the $D$-convex hull of $\mathcal C_g$ is $\Sigma_{g,1}$ by Lemma \ref{lemma:hull}, in this case $S_i = \Sigma_{g,1}$ and the theorem is proved.

Otherwise, select $c_j \in \mathcal C_g$ a curve {\em not entirely contained} in $S_i$. We then define $S_{i+1}$ to be the $D$-convex hull of $S_i \cup c_j$. Since $S_i$ is $D$-convex and $c_j$ is a basic chordal curve, necessarily $c_j$ enters and exits $S_i$ exactly once, and hence $S_{i+1}$ is the stabilization of $S_i$ along $c_j$. By Lemma \ref{lemma:commtrick}, $T_{c_j} \in \Gamma$, and by hypothesis, $\Mod(S_i) \le \Gamma$. By the stabilization lemma (Lemma \ref{prop:stab}), therefore $\Mod(S_{i+1}) \le \Gamma$ as well.\qed

\subsection{\boldmath Proof of Lemma \ref{prop:subsurface} for $g \ge 4$}\label{section:ge4} 

\begin{figure}[ht]
\labellist
\tiny
\pinlabel $c_1$ [br] at 120 140
\pinlabel $c_2$ [bl] at 100 110
\pinlabel $c_3$ [br] at 120 50
\pinlabel $c_4$ [br] at 140 75
\pinlabel $c_5$ [tr] at 90 37
\pinlabel $S_0$ at 80 100
\endlabellist
\includegraphics[scale=1]{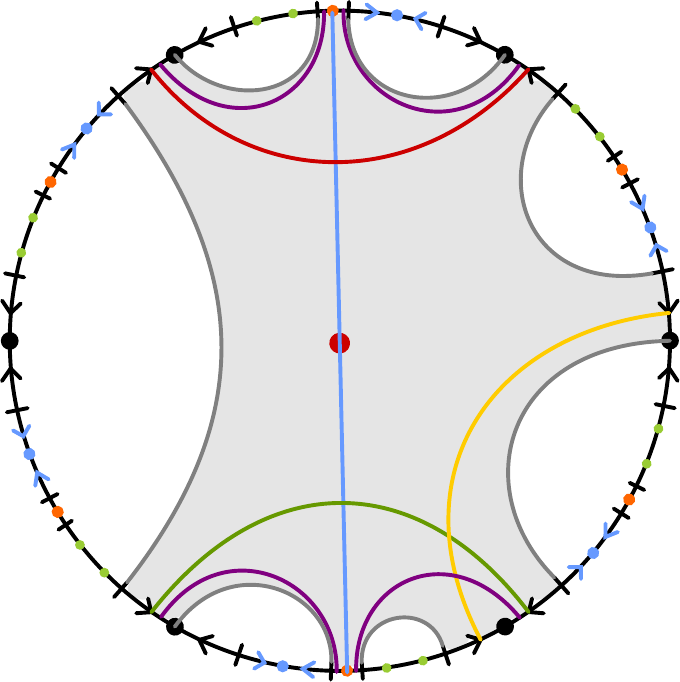}
\caption{For $g \equiv 1 \pmod 3$, the surface $S_0$ is taken as the $D$-convex hull of the curves $c_1, \dots, c_5$ as shown.}
\label{figure:1mod3}
\end{figure}

We will describe a chain of five curves $c_1, \dots c_5$ such that $c_1 \cup c_3 \cup c_5$ bounds a pair of pants. Such a configuration is supported on a surface $S_0$ of genus $2$ with one boundary component, and the associated Dehn twists form the Humphries generating set for $\Mod(S_0)$. The curves we will describe will either be of type $p$ (hence in $\Gamma$ by Lemma \ref{lemma:commtrick}) or else invariant under $\alpha_g^3$ (and hence in $C(\alpha_g^3) \le \Gamma$). Such a configuration is illustrated in Figure \ref{figure:1mod3} in the case of $g \equiv 1 \pmod 4$, but the construction we describe below works on all the model surfaces. 

Let $c_1$ be the curve of type $p$ connecting the edges of type $p$ labeled $1$ in Figure \ref{figure:models}. We take $c_3 = \alpha_g^3(c_1)$ and $c_4 = \alpha_g^4(c_1)$. Let $c_2$ be the curve of type $r$ intersecting $c_1$ and $c_3$. Finally, let $c_5$ be the curve obtained by connect-summing $c_1$ and $c_3$ along one of the segments of $c_2$. As shown in Figure \ref{figure:1mod3}, $c_5$ is invariant under $\alpha_g^3$ as required. 

We find that $c_1, c_3, c_4$ are curves of type $p$, and $c_2, c_5$ are invariant under $\alpha_g^3$, so all associated twists are elements of $\Gamma$, and hence $\Mod(S_0) \le \Gamma$ as claimed. \qed

%In this case, we consider the subsurface $S_0$ as shown in Figure \ref{figure:1mod3}. The curves $c_1, c_3, c_4$ are basic chordal curves of type $p$, and so by Lemma \ref{lemma:commtrick}, $T_{c_i} \in \Gamma$ for $i = 1,3,4$. The curves $c_2$ and $c_5$ are invariant under $\alpha_g^3$, and hence $T_{c_i} \in \Gamma$ for $i = 2,5$ as well. Moreover, $c_1 \cup c_3 \cup c_5$ bound a pair of pants on $S_0$, and $c_1, \dots, c_5$ form a chain. Thus $T_{c_1}, \dots, T_{c_5}$ can be identified with the Humphries generating set for $\Mod(S_0)$, and the result follows in this case. 

\subsection{\boldmath Proof of Lemma \ref{prop:subsurface} for $g = 3$}\label{section:3}
\begin{figure}[ht]
\labellist
\tiny
\pinlabel $c_1$ [br] at 104.71 130.38
\pinlabel $c_2$ [bl] at 110.54 39.68
\pinlabel $c_3$ [bl] at 51.02 59.52
\pinlabel $c_4$ [b] at 19.84 104.87
\pinlabel $c_5$ [tl] at 39.68 99.20
\pinlabel $c_6$ [tr] at 82.20 113.38
\pinlabel $1$ [bl] at 155.89 175.73
\pinlabel $6$ [bl] at 136.05 187.07
\pinlabel $6$ [br] at 59.52 187.07
\pinlabel $5$ [br] at 39.68 175.73
\pinlabel $5$ [br] at 0.00 107.71
\pinlabel $4$ [tr] at 0.00 85.03
\pinlabel $4$ [tr] at 39.68 19.84
\pinlabel $3$ [tr] at 59.52 8.50
\pinlabel $3$ [tl] at 136.05 8.50
\pinlabel $2$ [tl] at 158.73 19.84
\pinlabel $2$ [tl] at 195.57 85.03
\pinlabel $1$ [bl] at 195.57 110.54
\pinlabel $x$ [tr] at 144.55 175.73
\pinlabel $S_0'$ at 80 80
\pinlabel $S_0$ at 290 80
\endlabellist
\includegraphics[scale=1]{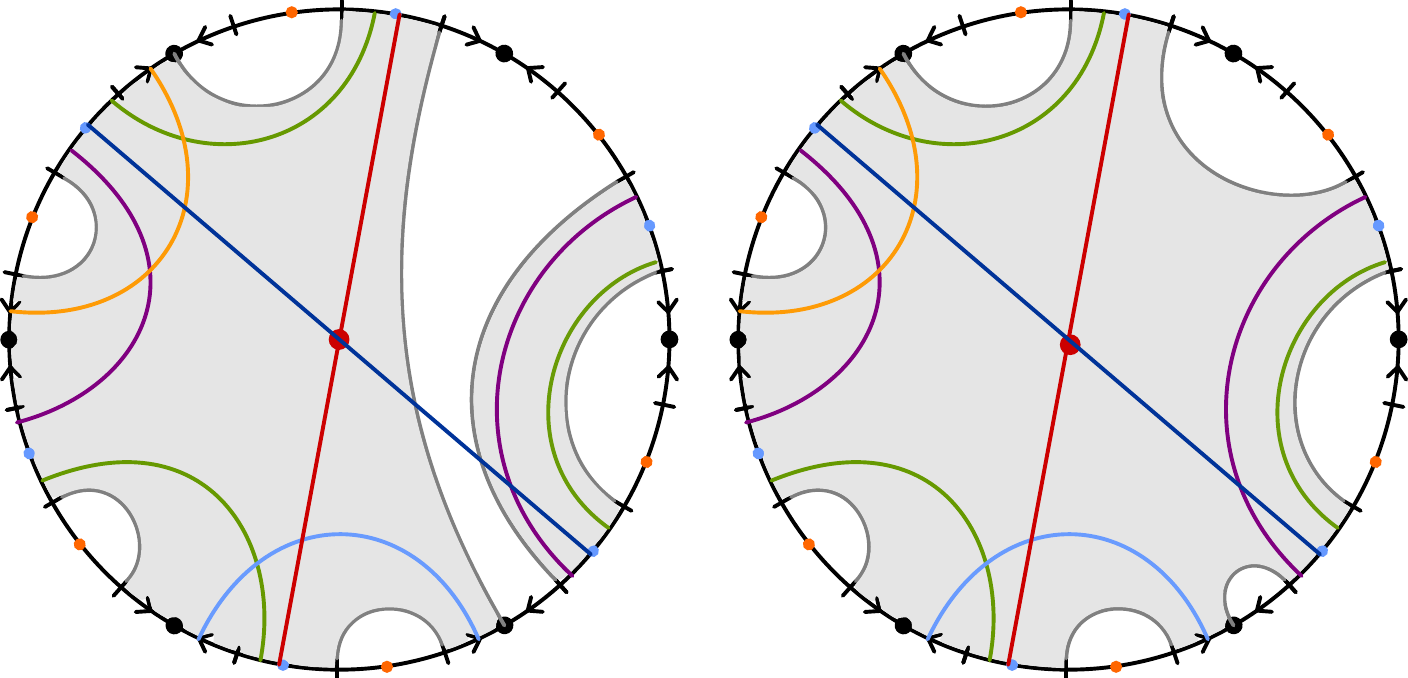}
\caption{The model surface for $g=3$, built from the monodromy tuple $1\ 5\ 3^2$. The surface $S'_0$ is taken as a regular neighborhood of the curves $c_1, \dots, c_5$ as shown, and then $S_0$ is the $D$-convex hull of $c_1, \dots, c_6$.}
\label{figure:genus3model}
\end{figure}

Recall from \eqref{tuple} that the monodromy tuple for $g = 3$ is $1\ 5\ 3^2$. The model surface for this tuple is shown in Figure \ref{figure:genus3model}. To establish Lemma \ref{prop:subsurface} in this case, we first consider the subsurface $S'_0$ shown at left in Figure \ref{figure:genus3model}. By construction $S'_0$ is a regular neighborhood of $c_1, \dots, c_5$. Observe that $c_1$ and $c_5$ are $\alpha_g^3$-invariant, $c_3$ is $\alpha_g^2$-invariant, and $c_2$ and $c_4$ are basic chordal curves of type $p$. Thus each associated Dehn twist is an element of $\Gamma$. As above, $T_{c_1}, \dots, T_{c_5}$ determines the Humphries generating set for $S'_0$, and we conclude that $\Mod(S'_0) \le \Gamma$. 

We next consider $S_0$. By construction, $S_0$ is the $D$-convex hull of $c_1, \dots, c_6$, and it is also clear that $S_0$ is the stabilization of $S'_0$ along $c_6$. Since $c_6$ is a basic chordal curve of type $r$, we have $T_{c_6} \in \Gamma$ by Lemma \ref{lemma:commtrick}. By the stabilization lemma (Lemma \ref{prop:stab}), it follows that $\Mod(S_0) \le \Gamma$ as required.\qed

\subsection{\boldmath Proof of Theorem \ref{main} for $g = 2$}\label{section:2}

\begin{figure}[ht]
\labellist
\tiny
\pinlabel $c_1$ [tr] at 113.38 99.20
\pinlabel $c_2$ [t] at 85.03 116.21
\pinlabel $c_3$ [tl] at 53.85 99.20
\pinlabel $c_4$ [bl] at 53.85 65.19
\pinlabel $c_5$ [b] at 85.03 48.18
\pinlabel $c_6$ [br] at 113.38 65.19
\endlabellist
\includegraphics[scale=1]{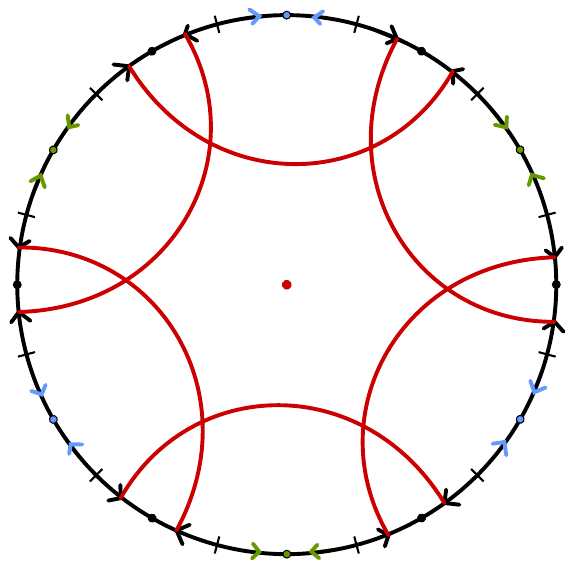}
\caption{The model surface for $g = 2$ and the curves $c_1, \dots, c_6$.}
\label{figure:2}
\end{figure}

For $g = 2$, we take a different approach based around an explicit factorization of $\alpha_2$ into Dehn twists. The model for $g = 2$ is shown in Figure \ref{figure:2}. For ease of notation, we write $T_i$ in place of $T_{c_i}$ throughout the argument. The mapping class group is generated by the twists $T_{i}$ for $i = 1, \dots, 5$; we will show that all $T_{i} \in \Gamma$. The fundamental observation is that
\[
\alpha_2 = T_{1} T_2T_3T_4T_5.
\]
We also observe that 
\[
T_1T_4,\ T_2T_5,\ T_3T_6 \in \Gamma
\]
since these pairs of curves are invariant under $\alpha_2^3$, and also
\[
T_1 T_3 T_5,\ T_2 T_4 T_6 \in \Gamma
\]
since these triples are invariant under $\alpha_2^2$. 

We consider the expression of elements of $\Gamma$
\begin{align*}
\alpha_2 (T_2 T_5)^{-1} (T_1 T_4)^{-1} &= T_1 T_2 T_3 T_4 T_5 T_5^{-1} T_2^{-1} T_4^{-1} T_1^{-1}\\
	& = T_1 T_2 T_3 T_2^{-1} T_1^{-1},
\end{align*}
with the second equality holding by the commutativity of $T_i$ and $T_j$ whenever $i \ne j \pm 1$. Conjugating $T_1 T_2 T_3 T_2^{-1} T_1^{-1}$ by $(T_1 T_3 T_5)^{-1}$ shows that the element
\[
(T_1 T_3 T_5)^{-1} T_1 T_2 T_3 T_2^{-1} T_1^{-1} (T_1 T_3 T_5) = T_3^{-1} T_2 T_3 T_2^{-1} T_3
\]
is also in $\Gamma$. Conjugating this by $(T_3 T_6)$ shows that 
\[
T_2 T_3 T_2^{-1} \in \Gamma;
\]
a final conjugation by $(T_2 T_5)^{-1}$ reveals that $T_3 \in \Gamma$. Conjugation by $\alpha_2$ now exhibits all $T_i$ in $\Gamma$.  \qed

\section{Mapping class group actions on $\R^3$}\label{section:R3}

In this final section, we show how Theorem \ref{main} implies Theorem \ref{R3} and Corollary \ref{R23}. Recall that the objective is to show that for $g \ge 4$, any action of $\Mod(\Sigma_{g,1})$ on $\R^3$ has a globally-invariant line, and consequently that $\Mod(\Sigma_{g,1})$ does not act by $C^1$ diffeomorphisms on $\R^2$ or $\R^3$.

We consider an action $\rho$ of $\Mod(\Sigma_{g,1})$ on $\mathbb{R}^3$. Recall that any such action must necessarily preserve orientation. We will appeal to the following result of Lanier--Margalit \cite[Theorem 1.1]{LanierMargalit}.
\begin{thm}[Lanier--Margalit]\label{theorem:LM}
For $g \ge 3$, every nontrivial periodic mapping class that is not hyperelliptic normally generates $\Mod(\Sigma_{g,1})$ or $\Mod(\Sigma_{g})$.
\end{thm}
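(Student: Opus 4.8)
The plan is to reduce the entire statement to the production of a \emph{single} Dehn twist. Recall that for $g \ge 1$ the group $\Mod(\Sigma_g)$ (and likewise $\Mod(\Sigma_{g,1})$) is generated by the Dehn twists about nonseparating simple closed curves, and by the change-of-coordinates principle any two such twists are conjugate. Consequently, if a \emph{normal} subgroup contains one nonseparating Dehn twist, it contains all of them, and therefore equals the whole group. Writing $N := \langle\langle f \rangle\rangle$ for the normal closure of the periodic class $f$, the theorem thus reduces to showing that $N$ contains one honest nonseparating Dehn twist. The hypothesis that $f$ is not hyperelliptic is forced by a homological obstruction that also orients the whole proof: the hyperelliptic involution $\iota$ acts on $H_1(\Sigma_g;\mathbb{Z})$ as $-I$, which is central in $Sp(2g,\mathbb{Z})$, so under the surjection $\Mod(\Sigma_g) \to Sp(2g,\mathbb{Z})$ the image of $N = \langle\langle \iota \rangle\rangle$ is the normal closure of $-I$, namely $\{\pm I\}$, a proper subgroup. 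Hence $\iota$ genuinely cannot normally generate, and the positive statement must exploit exactly the feature that a non-hyperelliptic $f$ has non-central image in $Sp(2g,\mathbb{Z})$.

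The first concrete step is to observe that $N$ already contains a large supply of ``twist differences.'' For any mapping class $h$ one has $f^{-1}h f h^{-1} \in N$; taking $h = T_c$ gives $f^{-1}T_c f\, T_c^{-1} = T_{f^{-1}(c)}T_c^{-1} \in N$, so after relabelling $T_a T_{f(a)}^{-1} \in N$ for \emph{every} simple closed curve $a$. These elements are all we have to work with, and the task is to combine them into a single twist. Since $f$ is non-hyperelliptic, its $Sp$-image is non-central, which is what must ultimately guarantee that some curve $c$ genuinely crosses its image, i.e. $i(c, f(c)) \ge 1$; without such a crossing curve every twist difference could be homologically trivial and no twist could be extracted.

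The heart of the argument, and the step I expect to be the main obstacle, has two parts: \emph{existence} of a well-positioned curve, and \emph{extraction} of a twist from the differences it supplies. For existence I would pass through Nielsen realization, modelling $f$ as a finite-order isometry of a hyperbolic surface and reading off the quotient orbifold $\Sigma_g/\langle f\rangle$ together with its branch data; the non-hyperelliptic, $g \ge 3$ hypotheses should furnish enough room to locate a curve $c$ with $i(c, f(c)) = 1$, or, failing that, a configuration of controlled small intersection. Establishing this uniformly across all conjugacy classes of periodic elements is the delicate part and requires case analysis on the branching vector. Granting such a $c$, conjugating $T_c T_{f(c)}^{-1}$ by elements of $\Mod$ and applying change of coordinates places $T_a T_b^{-1}$ in $N$ for every ordered pair with $i(a,b) = 1$; the remaining task is a relation-theoretic extraction using the chain relation $(T_a T_b)^6 = T_\delta$ on the one-holed torus neighborhood of $a \cup b$ together with the lantern relation on a surrounding four-holed sphere. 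The naive attempt to extract a twist \emph{within} a single one-holed torus is obstructed by the $\mathbb{Z}/12$ abelianization of $SL(2,\mathbb{Z})$, which sends every twist difference to zero; beating this obstruction is precisely what forces $g \ge 3$ and the use of the lantern relation, and is also where Harer's observation that twists lie in the commutator subgroup becomes relevant.

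Finally I would resolve the dichotomy between $\Mod(\Sigma_{g,1})$ and $\Mod(\Sigma_g)$ recorded in the statement. If $f \in \Mod(\Sigma_{g,1})$ fixes the marked point, the curves and twists produced above can be chosen in the complement of that point, so the construction takes place in the pointed surface and yields $N = \Mod(\Sigma_{g,1})$; starting instead from $f \in \Mod(\Sigma_g)$, or forgetting the marked point, the same mechanism yields $N = \Mod(\Sigma_g)$. The two cases are linked by the Birman exact sequence, and I would check that the twist-generation statement is compatible across the forgetful map so as to land on exactly the alternative as phrased.
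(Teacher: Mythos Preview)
The paper does not prove this theorem at all: it is quoted verbatim as \cite[Theorem~1.1]{LanierMargalit} and used as a black box in Section~\ref{section:R3}. The only additional content the paper supplies is the one-line remark that the cited source treats $\Mod(\Sigma_g)$ and that ``the same method applies to $\Mod(\Sigma_{g,1})$.'' So there is no in-paper argument against which to compare your proposal.

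That said, what you have sketched is in spirit the actual Lanier--Margalit strategy: their ``well-suited curve criterion'' says that if some nonseparating $c$ has small geometric intersection with $f(c)$ (in fact $i(c,f(c))\le 1$, or $=2$ with a topological side condition), then the normal closure of $f$ contains a nonseparating Dehn twist and hence is everything; they then verify the criterion for every non-hyperelliptic periodic class via Nielsen realization and an analysis of the quotient orbifold. Your outline tracks this, but you should be aware that the step you flag as ``the delicate part'' really is the entire content of their paper---producing the well-suited curve uniformly over all conjugacy classes of periodic elements is a genuine case analysis, not a formality, and your appeal to ``case analysis on the branching vector'' is a placeholder rather than an argument. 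Likewise, the extraction step in Lanier--Margalit is cleaner than the chain-plus-lantern maneuver you propose: from $i(c,f(c))=1$ one gets $T_aT_b^{-1}\in N$ for all $a,b$ with $i(a,b)=1$, and then a single lantern relation (with the seven curves chosen so that three pairs are in this relation) yields a bare twist directly. If your intent is to supply a self-contained proof here, you would essentially be reproducing the Lanier--Margalit paper; for the purposes of the present paper a citation is what is called for.
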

We remark that \cite[Theorem 1.1]{LanierMargalit} only discusses the case $\Mod(\Sigma_{g})$, however the same method applies to $\Mod(\Sigma_{g,1})$.  
\vskip 0.3cm

By Theorem \ref{theorem:LM}, $\alpha_g^2$ normally generates $\Mod(\Sigma_{g,1})$. Therefore if $\rho$ is not trivial, then $\rho(\alpha_g^2)$ is not trivial. By local Smith theory (\cite[Theorem 20.1]{Bredon}), the fixed point set of $\alpha_g^2$ is a $\Z/3\Z$--homology manifold of dimension less than $2$. By \cite[Theorem 16.32]{Bredon}, when the dimension of a homology manifold is less than $2$, it is a topological manifold. We claim that the fixed set $F(\alpha_g^2)$ of $\rho(\alpha_g^2)$ is a single topological line in $\R^3$. Since $\R^3$ is acyclic, also $F(\alpha_g^2)$ is also acyclic (c.f. \cite[Corollary 19.8]{Bredon}). Hence $F(\alpha_g^2)$ must have exactly one component. By \cite[Corollary 19.11]{Bredon}, $F(\alpha_g^2)$ is a line, since we can consider the action on the one-point compactification of $\R^3$. This can be compared with the fact that the fixed set of a torsion element in $SO(3)$ is a single line in $\R^3$.

Since $\alpha_g^3$ is not hyperelliptic, the same argument shows that the fixed set of $\rho(\alpha^3)$ is also a line $F(\alpha_g^3)$. We claim that 
$F(\alpha_g^2)=F(\alpha_g^3)$, which will be denoted by $F$. If these lines are distinct, then the action of $\rho(\alpha_g)$ on $F(\alpha_g^3)$ must be nontrivial (otherwise $\rho(\alpha_g^2)$ would act trivially as well, implying $F(\alpha_g^2) = F(\alpha_g^3)$). As $\rho(\alpha_g^3)$ acts trivially on $F(\alpha_g^3)$ by construction, it follows that $\rho(\alpha_g)$ acts as an element of order $3$. This is a contradiction: there is no nontrivial action of $\Z/3\Z$ on a line. Thus by Theorem \ref{main}, $\Mod(\Sigma_{g,1})$ must preserve $F$, establishing Theorem \ref{R3}.

Now suppose $\rho$ acts by $C^1$ diffeomorphisms, and let $x \in F$ be any fixed point. Taking derivatives at $x$, we obtain a representation $R: \Mod(\Sigma_{g,1})\to \GL(3,\R)$. According to \cite[Theorem 1.1]{FranksHandel}, any such homomorphism is trivial. The Thurston stability theorem \cite{Thurston} then implies that the image of $\rho$ must be locally-indicable, i.e.  every finitely-generated subgroup admits a surjection onto $\Z$. In particular, $\text{im}(\rho)$ must be torsion-free, and so $\rho(\alpha_g)$ is the identity map. By Theorem \ref{theorem:LM}, it follows that the entire representation $\rho$ is trivial.\qed

\begin{rem}
In fact, the conclusions of Theorem \ref{R3} and Corollary \ref{R23} hold for $g =3$ as well, using slightly different arguments. We briefly discuss this. From the discussion above, if $\rho(\alpha_g^3)$ is not trivial, the same arguments apply. Otherwise, denote by $H: \Mod(S_{g,1})\to \text{Sp}(2g,\Z)$ the induced action on $H_1(S_g;\Z)$. If $\alpha_g^3$ is hyperelliptic and $\rho(\alpha_g^3)$ is the identity, we claim that $\rho$ factors through $H$. This is because the hyperelliptic involution $\alpha_g^3$ normally generates the group $H^{-1}(\pm I)$ by \cite[Proposition 3.3]{LanierMargalit}, whose proof also works for the punctured case. 

To conclude, we claim that there is no action of $\text{Sp}(2g,\Z)$ on $\mathbb{R}^3$, even by homeomorphisms. According to \cite[Corollary 1]{Zimmermann}, $(\Z/3\Z)^3$ is not a subgroup of $\text{Homeo}(\mathbb{R}^3)$. The claim then follows from \cite[Lemma 10]{ChenLanier}.
\end{rem}

\bibliography{citing}{}
\end{document}